\title{Bott vanishing for Fano 3-folds}
\author{Burt Totaro}
\date{  }
\def\Z{\text{\bf Z}}
\def\R{\text{\bf R}}
\def\C{\text{\bf C}}
\def\P{\text{\bf P}}
\def\N{\text{\bf N}}
\def\surj{\twoheadrightarrow}
\def\m{\mathfrak{m}}
\def\so{\mathfrak{so}}
\def\ohat{\widehat{\Omega}^1_F}
\DeclareMathOperator{\Aut}{Aut}
\DeclareMathOperator{\Gr}{Gr}
\DeclareMathOperator{\topsub}{top}
\DeclareMathOperator{\Curv}{Curv}
\DeclareMathOperator{\Pic}{Pic}
\DeclareMathOperator{\Bl}{Bl}
\DeclareMathOperator{\pt}{pt.}
\DeclareMathOperator{\Nef}{Nef}
\begin{document}
\maketitle
\newtheorem{theorem}{Theorem}[section]
\newtheorem{proposition}[theorem]{Proposition}
\newtheorem{corollary}[theorem]{Corollary}
\newtheorem{lemma}[theorem]{Lemma}
\newtheorem{conjecture}[theorem]{Conjecture}

\theoremstyle{definition}
\newtheorem{definition}[theorem]{Definition}
\newtheorem{example}[theorem]{Example}
\newtheorem{question}[theorem]{Question}

\theoremstyle{remark}
\newtheorem{remark}[theorem]{Remark}

A smooth projective variety $X$ satisfies {\it Bott vanishing }if
$$H^j(X,\Omega^i_X\otimes L)=0$$
for all $j>0$, $i\geq 0$, and all ample line bundles $L$.
This is a strong property, useful when it holds.
Combined with Riemann-Roch, Bott vanishing gives complete information
about the sections of many natural vector bundles on $X$.

Bott proved Bott vanishing for projective space. An important
generalization, by Danilov and Steenbrink, is that
every smooth projective toric variety satisfies Bott vanishing;
proofs can be found in \cite{BC, BTLM, Mustata, Fujino}.
The first non-toric Fano variety found to satisfy Bott vanishing
is the quintic del Pezzo surface \cite{Totarobott}. That paper also
analyzes Bott vanishing among
some varieties that are not
rationally connected, such as K3 surfaces.
Generalizing the quintic del Pezzo surface,
Torres showed that all GIT quotients of $(\P^1)^n$ by the action
of $PGL(2)$ satisfy Bott vanishing \cite{Torres}.
Torres's examples include one new Fano variety (not just a product) in each
even dimension.

A Fano variety $X$ that satisfies Bott vanishing is rigid, since
$H^1(X,TX)=H^1(X,\Omega^{n-1}_X\otimes K_X^*)=0$. As a result,
there are only finitely many smooth complex Fano varieties
in each dimension (up to isomorphism)
that satisfy Bott vanishing. We can view them
as a generalization of toric Fano varieties; they should
have some kind of combinatorial classification.

In this paper, we classify the smooth Fano 3-folds
that satisfy Bott vanishing. There are many more than
expected.

\begin{theorem}
\label{main}
Bott vanishing holds for exactly 37 smooth complex Fano 3-folds,
up to isomorphism.
These consist of the 18 toric Fano 3-folds and 19 others.
In Mori-Mukai's numbering, the toric Fano 3-folds are
(1.17), (2.33)--(2.36), (3.25)--(3.31), (4.9)--(4.12), (5.2)--(5.3).
The non-toric Fano 3-folds that satisfy Bott vanishing
are (2.26), (2.30), (3.15)--(3.16), (3.18)--(3.24), (4.3)--(4.8),
(5.1), and (6.1).
\end{theorem}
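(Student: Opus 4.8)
The plan is to combine the Mori--Mukai classification of the $105$ deformation families of smooth Fano $3$-folds with the rigidity constraint already noted: since Bott vanishing forces $H^1(X,TX)=0$, only the deformation-rigid families can qualify, and each rigid family is a single isomorphism class, so counting families counts $3$-folds up to isomorphism. First I would tabulate, from the standard computations of $h^1(X,TX)$ underlying the Mori--Mukai moduli dimensions, exactly which families satisfy $h^1(X,TX)=0$; every family with positive-dimensional moduli is eliminated at once. For the toric members of the surviving list, Bott vanishing is automatic by Danilov--Steenbrink, disposing of the $18$ toric $3$-folds. This reduces the theorem to a finite analysis over the remaining families: for each I must either establish Bott vanishing or exhibit a single ample $L$ and a pair $(i,j)$ with $j>0$ and $H^j(X,\Omega^i_X\otimes L)\neq 0$. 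Two simplifications make each case tractable: by Kodaira vanishing the cases $i=0$ and $i=3$ hold for every ample $L$ on any Fano $3$-fold, so only $i=1$ and $i=2$ carry content; and Serre duality in the form $H^j(X,\Omega^1_X\otimes L)\cong H^{3-j}(X,\Omega^2_X\otimes L^{-1})^*$ links the two remaining degrees.

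For the positive cases I would exploit the explicit geometry of each family, proving one structural lemma per construction. For a $\P^1$- or $\P^2$-bundle $\pi\colon X=\P(E)\to S$ over a base $S$ satisfying Bott vanishing, the relative Euler and cotangent sequences express the direct images $R^q\pi_*(\Omega^i_X\otimes L)$ in terms of bundles $\Omega^a_S\otimes\mathrm{Sym}^b(E)\otimes(\det E)^c$ twisted by ample classes on $S$; when $E$ is a sum of line bundles, as in these examples, these reduce to Bott vanishing on $S$ together with line-bundle vanishing, and crucially yield the conclusion for \emph{all} ample $L$ on $X$ at once. For a blow-up $\pi\colon\Bl_Z Y\to Y$ of a Bott-vanishing $Y$ along a point or a smooth rational curve, I would push forward the standard exact sequences relating $\Omega^i_{\Bl_Z Y}$ to $\pi^*\Omega^i_Y$ and to sheaves supported on the exceptional divisor, controlling the correction terms using the positivity of $-K$ and the smallness of $Z$. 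For families realized as smooth divisors $X\subset V$ in a toric variety or bundle $V$ satisfying Bott vanishing, I would use $0\to\Omega^1_V(-X)\to\Omega^1_V\to\Omega^1_V|_X\to 0$ and $0\to N^\vee_{X/V}\to\Omega^1_V|_X\to\Omega^1_X\to 0$ to reduce Bott vanishing on $X$ to Bott vanishing on $V$ plus twisted line-bundle vanishing on $V$, the latter being combinatorial when $V$ is toric.

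For the remaining families I must rule out Bott vanishing. Those failing rigidity are already excluded, so it remains to treat any family that survives the rigidity filter yet fails Bott vanishing by producing an explicit obstruction. The most efficient source is a natural fibration or contraction $f\colon X\to Y$: the quotient $\Omega^1_X\twoheadrightarrow\Omega^1_{X/Y}$, or a sub-line-bundle coming from $f^*\Omega^1_Y$, lets me transport a nonvanishing twisted cohomology class on the simpler sheaf $\Omega^1_{X/Y}$ up to $\Omega^1_X$, choosing $L$ to be a small ample class for which the relevant group on a fiber or on the exceptional surface does not vanish; restricting $\Omega^i_X\otimes L$ to a natural divisor on which Bott vanishing is already known to fail is an alternative. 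Since disproving Bott vanishing requires only one ample $L$, each such computation is short once the right $L$ is chosen.

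The main obstacle is the positive direction for the non-toric families: establishing vanishing for \emph{every} ample line bundle simultaneously, not merely for $L\gg 0$, since the delicate cases occur near the boundary of the (rational polyhedral) nef cone, where Serre-type vanishing gives no uniform control. This forces me to find, for each of the $19$ families, a structural description to which one of the lemmas above applies and then to verify the resulting battery of twisted cohomology vanishings. A secondary but essential obstacle is completeness: I must confirm that the boundary between the $37$ surviving $3$-folds and the other $68$ families is exactly as stated, supplying a rigidity failure or an explicit counterexample for \emph{every} remaining family and checking that no rigid family has been overlooked.
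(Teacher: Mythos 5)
Your overall architecture (classification, rigidity filter, explicit counterexamples for the rigid-but-failing families, case-by-case positive proofs reducing to toric or simpler data) matches the paper's, but two of your core mechanisms have genuine gaps that would break the positive direction. First, your blow-up lemma presupposes that the base $Y$ satisfies Bott vanishing. That hypothesis fails in exactly the cases that make this theorem surprising: (2.30) and (3.19) are blow-ups of the quadric 3-fold $Q$ at points, (2.26) is the blow-up of $Q$ along a twisted cubic, and (3.20) is the blow-up of $Q$ along two lines --- and $Q$ itself does \emph{not} satisfy Bott vanishing ($h^1(Q,\Omega^2_Q(1))=1$). So pushing forward to the base and quoting Bott vanishing there cannot work; the paper instead exploits what survives on $Q$ (rigidity, i.e.\ $H^j(Q,TQ)=0$ for $j>0$, the self-duality $\Omega^1_Q\cong TQ(-2)$, and an explicit $\so(5)$ computation to get surjectivity of sections onto fibers at the blown-up centers), or switches to alternative Mori--Mukai descriptions as blow-ups of toric varieties, or (for (3.20)) realizes $X$ as a hypersurface in a toric 4-fold. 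Second, you correctly identify the main obstacle --- vanishing for \emph{all} ample $L$, not just $L\gg 0$ --- but you supply no tool for it outside the projective-bundle case, and none of the 18 new non-toric families is a projective bundle. The paper's missing ingredient here is its inductive lemma: if $D\subset X$ is a smooth divisor satisfying Bott vanishing and $L$, $L-D$ are ample with Bott vanishing known for $L-D$, then it holds for $L$ (Lemma \ref{induction}). Combined with computing generators of the nef \emph{monoid} of $\Pic(X)$ and checking that each generating linear system contains a toric surface or a quintic del Pezzo surface, this reduces every case to a single minimal ample line bundle (usually $-K_X$), for which rigidity handles $\Omega^2_X$ and a blow-up/complete-intersection analysis (the paper's Lemma \ref{ci}) handles $\Omega^1_X$. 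Without some such reduction, your blow-up and divisor-in-ambient arguments only reach particular line bundles --- e.g.\ restrictions from the ambient variety, which cannot exhaust $\Nef(X)$ when, as for (5.1), the nef cone of $X$ is strictly bigger than that of the ambient toric 4-fold.

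A smaller but real gap is the negative direction for the rigid families of Picard number one. For $V_5$ (1.15) and the quadric (1.16) there are no nontrivial fibrations, contractions, or natural divisors with known failure, so your proposed transport mechanism has nothing to act on. The decisive and essentially trivial tool, used in the paper, is Riemann--Roch: exhibit a small ample $L$ with $\chi(X,\Omega^2_X\otimes L)<0$ (e.g.\ $\chi(Q,\Omega^2_Q(1))=-1$, $\chi(V_5,\Omega^2_{V_5}(1))=-3$), which forces some higher cohomology to be nonzero without constructing any class. The same computation disposes of (2.27), (2.29), (2.31), (2.32), and (3.17), and its nonnegativity ($\chi(X,TX)\geq 0$) also gives a cheaper first filter than tabulating $h^1(X,TX)$ family by family.
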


Here (6.1) is $\P^1$ times the quintic del Pezzo surface,
but the other 18 non-toric examples are new. Bott vanishing
fails for the quadric 3-fold,
but, surprisingly, it holds for the blow-up of the quadric
at a point, (2.30). Likewise, Bott vanishing fails for the flag manifold
$W=GL(3)/B$, but it holds for several blow-ups of $W$ such
as (3.16). In order to prove Bott vanishing in all cases of Theorem
\ref{main}, we find that the fastest
approach is to reduce systematically
to calculations on toric varieties.
Many of the calculations in characteristic zero were first made
by Belmans, Fatighenti, and Tanturri \cite{BFT, Fanography},
as explained in section \ref{fail}.

Our arguments actually prove Theorem \ref{main}
in any characteristic not 2, among the known smooth Fano 3-folds.
It is an open question whether the classification
of smooth Fano 3-folds has the same form in every characteristic;
see the discussion by Achinger-Witaszek-Zdanowicz \cite[section 7
and Appendix A]{AWZ2}. Still, among the known Fano 3-folds,
the ones that satisfy Bott vanishing
are the same in every characteristic not 2. We also find those
for which Bott vanishing persists in characteristic 2.
We give meaningful proofs for all the cohomology calculations,
although computers are useful for checking that nothing has gone wrong.

Our arguments suggest a novel conjecture on vanishing.
For every projective birational morphism $\pi\colon X\to Y$
of smooth varieties,
and every line bundle $A$ on $X$ that is ample over $Y$,
the higher direct image sheaf $R^j\pi_*(\Omega^i_X\otimes A)$ should be zero
for all $j>0$ and $i\geq 0$ (Conjecture \ref{relative}).

This work was supported by NSF grant DMS-2054553.
Thanks to Pieter Belmans, S\'andor Kov\'acs,
and Talon Stark for useful conversations. In particular,
Stark found all the cases in which Bott vanishing fails
(section \ref{fail}).

\tableofcontents

\section{Vanishing theorems}

In this section, we recall the known Bott vanishing property
for toric varieties, and we prove a variant (Proposition \ref{nef}).
The basic result (attributed to Danilov and Steenbrink)
is that for a smooth projective toric variety $X$ over a field,
we have $H^j(X,\Omega^i(L))=0$
for $j>0$, $i\geq 0$, and $L$ an ample line bundle.
Fujino proved several generalizations, such as
the following \cite[Theorem 1.3]{Fujinolog}.

\begin{theorem}
\label{log}
Let $X$ be a smooth projective toric variety over a field,
and let $D$ and E be reduced toric divisors in $X$
without common irreducible components. Then
$$H^j(X,\Omega^i_X(\log (D+E))(L-E))=0$$
for $j>0$, $i\geq 0$, and $L$ an ample line bundle.
\end{theorem}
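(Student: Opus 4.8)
The plan is to exploit the torus action and reduce everything to the case of the full toric boundary, where the sheaf in question trivializes. Write $\Sigma$ for the fan of $X$ in $N_{\R}$, with $M=N^\vee$ the character lattice, and let $D_\rho$ denote the prime toric divisor attached to a ray $\rho$. Since $X$ is smooth, its toric boundary $\Delta=\sum_\rho D_\rho$ is a simple normal crossing divisor, and the logarithmic cotangent bundle along $\Delta$ is trivial: $\Omega^i_X(\log\Delta)\cong\mathcal O_X\otimes_{\Z}\wedge^i M$. Consequently, for any ample $L$ one has $H^j(X,\Omega^i_X(\log\Delta)(L))=\wedge^i M\otimes H^j(X,\mathcal O_X(L))=0$ for $j>0$, by toric Serre vanishing. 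This is the endpoint I want to reach by induction.

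Two short exact sequences will drive the induction, both valid because every partial sum of prime toric divisors is SNC. First, the Poincar\'e residue sequence for adjoining one log pole along a ray $\rho$ not already in the boundary divisor $B$,
$$0\to\Omega^i_X(\log B)\to\Omega^i_X(\log(B+D_\rho))\to\iota_*\Omega^{i-1}_{D_\rho}(\log B|_{D_\rho})\to 0,$$
and second, the restriction sequence
$$0\to\Omega^i_X(\log(B+D_\sigma))(-D_\sigma)\to\Omega^i_X(\log B)\to\iota_*\Omega^i_{D_\sigma}(\log B|_{D_\sigma})\to 0$$
for a ray $\sigma\notin B$, whose kernel is computed in local coordinates to be exactly the displayed twisted-down log sheaf. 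Both sequences may be tensored with any line bundle, and in each the quotient lives on a toric divisor, a smooth projective toric variety of dimension $\dim X-1$. Because the restriction of an ample bundle to a closed subvariety is ample, and because the traces of $D$ and of the $E$-components on such a divisor are again reduced toric divisors without common components, the quotient terms have exactly the shape of the original statement in one lower dimension.

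I would therefore induct on $\dim X$ and, within a fixed dimension, on the number of prime components of $E$. Peeling off one component $D_\sigma$ of $E$ via the restriction sequence, twisted by $L-(E-D_\sigma)$, expresses $\Omega^i_X(\log(D+E))(L-E)$ as the kernel, with middle term the same sheaf for $E-D_\sigma$ (fewer components) and quotient an instance of the statement on $D_\sigma$ (lower dimension). The base case $E=0$ is pure logarithmic Bott vanishing, $H^j(X,\Omega^i_X(\log D)(L))=0$; this I reduce by the residue sequence, adjoining the missing boundary rays one at a time until reaching $\Delta$, where the trivialization above finishes the argument. The higher-dimensional induction feeds the quotient terms at every stage.

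The genuine obstacle is that the long exact sequences only close cleanly for $j\ge 2$: at $j=1$ one is left needing the restriction maps on $H^0$ to be surjective, equivalently the connecting maps $\partial\colon H^0(\text{quotient})\to H^1(\text{kernel})$ to vanish, which is naively circular. The resolution, and the real content of Danilov's computation, is to run the whole induction inside the $M$-grading coming from the torus $T$: every sheaf and every sequence above is $T$-equivariant, so it suffices to treat each weight $u\in M$ separately, and in weight $u$ the pieces $H^j(\cdot)_u$ are computed from the reduced cohomology of explicit polyhedral subsets of $N_{\R}$ cut out by the divisor coefficients—a single region for a line bundle, and a Koszul-type complex of such regions when $i>0$. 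The role of ampleness of $L$, together with the precise $-E$ correction to the twist, is to force these regions to be convex, hence contractible or empty, so that all of their reduced cohomology vanishes, including $\widetilde H^0$. This connectivity is exactly what makes the $j=1$ connecting maps vanish and lets the induction go through; establishing the convexity uniformly across $i$, $D$, and $E$ is the step I expect to require the most care.
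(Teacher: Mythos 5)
The paper itself does not prove this statement: Theorem \ref{log} is quoted directly from Fujino \cite[Theorem 1.3]{Fujinolog}, so there is no in-paper argument to compare yours against, and your proposal has to stand on its own. Its scaffolding is sound. Both short exact sequences are correct (the identification of the kernel of restriction with $\Omega^i_X(\log(B+D_\sigma))(-D_\sigma)$ checks out in local coordinates), the quotient terms are genuine instances of the statement on a smooth projective toric variety of dimension one lower, restrictions of ample bundles stay ample, and the traces of $D$ and $E-D_\sigma$ on $D_\sigma$ remain reduced toric divisors without common components. The double induction is organized correctly, and for $j\geq 2$ it closes.

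The gap is the one you name yourself, and you do not close it. At every stage of both inductions --- peeling a component off $E$, and adjoining rays to reach the full boundary $\Delta$ in the base case $E=0$ --- the case $j=1$ requires the restriction (or residue) map on $H^0$ to be surjective, equivalently the connecting map into $H^1$ of the kernel to vanish, and neither induction hypothesis supplies this. Your proposed repair --- decompose into $M$-weights and show that ampleness of $L$ together with the $-E$ twist makes the relevant polyhedral regions convex, hence acyclic --- is not a finishing detail: it is the entire content of the theorem, the part that Danilov's and Fujino's computations actually carry out, and you explicitly defer it (``the step I expect to require the most care''). Moreover, as described it would not suffice: for $i>0$ the weight-$u$ piece is the cohomology of a complex built from such regions (you say this yourself), and contractibility of each individual region does not imply acyclicity of the complex; one needs an argument about the complex as a whole, which is where the real work lies. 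Note also that if you could run the weight-by-weight argument uniformly in $i$, $D$, and $E$, it would prove the theorem outright and render the exact-sequence induction superfluous. As it stands, the proposal is a correct reduction of the statement to an unproven combinatorial claim of essentially the same difficulty as the statement itself; the missing ingredient is exactly what Fujino's cited proof, or a Danilov--Steenbrink-style equivariant computation, provides.
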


\begin{remark}
More generally, Fujino proves Theorem \ref{log}
even when the toric variety $X$ is singular, with $\Omega^i_X$
replaced by the sheaf of reflexive differentials
$\widehat{\Omega}^i_X:=(\Omega^i_X)^{**}$. Moreover,
he allows $L$ to be an ample Weil divisor rather than a line bundle,
replacing $\Omega^i_X\otimes L$ in the statement by
the reflexive sheaf $(\Omega^i_X\otimes O(L))^{**}$.
\end{remark}

Here is a related statement
that may be new. It is helpful for some later arguments. The case $i=0$
is known \cite[p.~68, p.~74]{Fulton}.

\begin{proposition}
\label{nef}
Let $X$ be a smooth proper toric variety over a field. Let $L$
be a nef line bundle on $X$. Then $H^j(X,\Omega^i_X\otimes L)=0$
for $j>i$.
\end{proposition}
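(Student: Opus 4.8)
The plan is to reduce everything to a single structural fact about smooth toric varieties, namely that the sheaf of logarithmic differentials along the full boundary is trivial, together with the already-known case $i=0$. Write $\partial X=\sum_\rho D_\rho$ for the reduced sum of the torus-invariant prime divisors. Since $X$ is smooth, $\partial X$ has simple normal crossings, and for every torus-orbit closure $V\subseteq X$ of codimension $k$ the divisors $D_\rho\cap V$ constitute the toric boundary $\partial V$, with $V$ again a smooth proper toric variety. The key input I would use is the fundamental toric identity
$$\Omega^j_V(\log\partial V)\cong O_V^{\oplus\binom{\dim V}{j}},$$
so that each such logarithmic differential sheaf is a trivial bundle. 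I would also use at the outset that $L|_V$ is nef on the complete toric variety $V$ whenever $L$ is nef on $X$.

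First I would set up the iterated Poincar\'e residue complex resolving $\Omega^i_X$. For a single boundary divisor $D_\rho$ one has the residue short exact sequence
$$0\to\Omega^i_X(\log(\partial X-D_\rho))\to\Omega^i_X(\log\partial X)\xrightarrow{\mathrm{res}}\Omega^{i-1}_{D_\rho}(\log((\partial X-D_\rho)|_{D_\rho}))\to 0,$$
and the intersection over all $\rho$ of the subsheaves $\Omega^i_X(\log(\partial X-D_\rho))$ is exactly $\Omega^i_X$. Totalizing these residue maps over all orbit closures yields an exact complex
$$0\to\Omega^i_X\to C^0\to C^1\to\cdots\to C^i\to 0,\qquad C^k=\bigoplus_{\operatorname{codim}V=k}\Omega^{i-k}_V(\log\partial V),$$
which has length $i$ because $\Omega^{i-k}_V=0$ for $k>i$.

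Next I would tensor this resolution with $L$. By the displayed triviality, $C^k\otimes L$ is a direct sum of copies of the line bundles $L|_V$ over the codimension-$k$ orbit closures $V$, each of which is nef on the complete toric variety $V$. Hence the case $i=0$ gives $H^q(V,L|_V)=0$ for $q>0$, and therefore $H^q(X,C^k\otimes L)=0$ for every $q>0$ and every $k$. Feeding this into the hypercohomology spectral sequence $E_1^{p,q}=H^q(X,C^p\otimes L)\Rightarrow H^{p+q}(X,\Omega^i_X\otimes L)$, only the row $q=0$ survives; since that row is concentrated in degrees $0\le p\le i$, I conclude $H^j(X,\Omega^i_X\otimes L)=0$ for all $j>i$.

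The step that needs the most care—and which I regard as the main obstacle—is establishing the exactness of the residue complex $C^\bullet$ and the identification of its terms with logarithmic differentials on the orbit closures; once that is in hand everything else is formal. If one prefers to avoid asserting this resolution outright, the same conclusion should follow from Deligne's weight filtration $W_\bullet\Omega^i_X(\log\partial X)$, whose top term is the trivial bundle $\Omega^i_X(\log\partial X)$ and whose graded pieces are $\bigoplus_{\operatorname{codim}V=m}\Omega^{i-m}_V$. Running an induction on $\dim X$, so that the bound $H^j(V,\Omega^{i-m}_V\otimes L|_V)=0$ for $j>i-m$ is available on each lower-dimensional stratum, and chasing the filtration then gives $H^j(X,\Omega^i_X\otimes L)=0$ for $j>i$.
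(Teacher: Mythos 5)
Your proposal is correct and is essentially the paper's own argument: the paper uses exactly this iterated residue resolution $0\to\Omega^i_X\to\Omega^i_X(\log\partial X)\to\oplus\Omega^{i-1}_{D_j}(\log\partial D_j)\to\cdots$, the triviality of logarithmic differentials with respect to the full toric boundary, and the vanishing $H^q(V,L|_V)=0$ for $q>0$ on each orbit closure to conclude $H^j(X,\Omega^i_X\otimes L)=0$ for $j>i$. The only cosmetic difference is that you phrase the final step via the hypercohomology spectral sequence, while the paper simply chases the resolution directly.
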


Unlike Theorem \ref{log}, Proposition \ref{nef}
does not extend to singular toric varieties using
the sheaf of reflexive differentials.
For example, Danilov found a complex projective toric variety $X$
with $H^2(X,\widehat{\Omega}^1_X)\neq 0$ \cite[Example 12.12]{Danilov}.

\begin{proof}
Use the resolution (which applies to any divisor
with simple normal crossings in a smooth variety):
$$0\to \Omega^i_X\to \Omega^i_X(\log \partial X)\to
\oplus \Omega^{i-1}_{D_j}(\log \partial D_j)\to\cdots
\to \oplus O_{D_{j_1\cdots j_i}}(\log \partial D_{j_1\cdots j_i})\to 0.$$
Here each term but the first involves logarithmic differentials
with respect to the full toric boundary, and the sums are over
all the torus-invariant subvarieties
of a given dimension in $X$. The vector bundles of logarithmic
differentials on a toric variety with respect to the full toric boundary
are all trivial \cite[p.~87]{Fulton}. Tensor this exact sequence
with a nef line bundle $L$. For each toric subvariety $D_J$ of $X$,
we have $H^j(D_J,L)=0$ for all $j>0$
\cite[p.~68, p.~74]{Fulton}.
Applying that fact to this
resolution of $\Omega^i_X\otimes L$, we conclude that
$H^j(X,\Omega^i_X\otimes L)=0$ for all $j>i$.
\end{proof}

Next, we state the Kodaira-Akizuki-Nakano (KAN) vanishing theorem
\cite[Theorem 4.2.3]{Lazarsfeld},
\cite{DI}, \cite[Theorem A.2]{AS}:

\begin{theorem}
\label{kan}
(1) Let $X$ be a smooth projective variety over a field of characteristic
zero. Then
$$H^j(X,\Omega^i\otimes L)=0$$
for all ample line bundles $L$ and all $i+j>\dim(X)$.

(2) Let $X$ be a smooth projective variety over a perfect
field $k$ of characteristic $p>0$. If $X$ lifts to $W_2(k)$
and $X$ has dimension $\leq p$, then $X$ satisfies KAN vanishing
(as in (1)). Also, if $X$ is globally $F$-split
and $X$ has dimension $\leq p+1$, then
$X$ satisfies KAN vanishing.
\end{theorem}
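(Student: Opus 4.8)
The plan is to treat the two characteristics by the two standard mechanisms, with part (1) reducible to part (2). For part (1), I would first give the classical analytic argument of Akizuki and Nakano. Fix a Hermitian metric on the ample line bundle $L$ whose curvature form $\Theta(L)$ is positive, and represent classes in $H^j(X,\Omega^i_X\otimes L)$ by $L$-valued harmonic $(i,j)$-forms. The Bochner-Kodaira-Nakano identity expresses the $\bar\partial$-Laplacian as the $\partial$-Laplacian plus the curvature operator $[i\Theta(L),\Lambda]$; pairing against a harmonic form and using that this curvature term is strictly positive precisely when $i+j>\dim(X)=n$ forces the form to vanish. Alternatively, part (1) follows from part (2) by the standard spreading-out argument: choose a model of $(X,L)$ over a finitely generated $\Z$-algebra, specialize to a closed point of large residue characteristic $p>n$, note that a smooth variety automatically lifts to $W_2$ of its (perfect) residue field after a further \'etale localization, and apply the characteristic-$p$ statement, since cohomological dimensions are preserved under the specialization.

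The heart is part (2), which I would prove by the method of Deligne and Illusie. Write $F\colon X\to X'$ for the relative Frobenius, where $X'$ is the Frobenius twist. The Cartier isomorphism identifies the cohomology sheaves of $F_*\Omega^\bullet_{X/k}$ with $\Omega^i_{X'/k}$, and the key input is that a lift of $X$ to $W_2(k)$ produces an actual splitting in the derived category, $\tau_{<p}F_*\Omega^\bullet_{X/k}\simeq\bigoplus_{i<p}\Omega^i_{X'/k}[-i]$. Since $\dim X\le p$, this truncation captures the entire de Rham complex. I would then twist by an ample line bundle $N$ on $X'$: using the projection formula and, under the identification $X'\cong X$ for $k$ perfect, the relation $F^*N\cong N^{\otimes p}$, the hypercohomology of $F_*\Omega^\bullet_{X/k}\otimes N$ can be computed in two ways. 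Via the decomposition it splits as a direct sum of the Hodge pieces $H^j(X',\Omega^i_{X'}\otimes N)$, while via the stupid filtration on the de Rham complex of $X$ it is governed by the groups $H^j(X,\Omega^i_X\otimes N^{\otimes p})$. Comparing the two, together with Serre duality to pass between the ample and anti-ample formulations, yields the vanishing of $H^j(X,\Omega^i_X\otimes L)=0$ for $i+j>n$.

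For the globally $F$-split case I would replace the $W_2$-lifting input by a splitting of the Frobenius morphism $\mathcal{O}_{X'}\to F_*\mathcal{O}_X$. An $F$-splitting realizes each $\Omega^i_{X'}$ as a direct summand of the corresponding cohomology sheaf of $F_*\Omega^\bullet_{X/k}$, which again breaks the de Rham complex into enough pieces to run the same cohomological comparison; the slightly weaker structure costs one unit in the dimension bound, giving $\dim X\le p+1$ rather than $\dim X\le p$. In all cases the ampleness of $L$ enters through Serre vanishing applied to the high Frobenius twist $N^{\otimes p}$.

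I expect the main obstacle to be the non-formal step: producing the derived-category decomposition from a $W_2(k)$-lifting (respectively, from an $F$-splitting), which is exactly the content of the Deligne-Illusie theorem and is not a routine computation. A secondary subtlety is the boundary of the dimension range, $\dim X=p$ (respectively $p+1$), where the truncation $\tau_{<p}$ only barely suffices and one must check that the twist by $L$ rescues the top graded piece; this is where the ampleness hypothesis, rather than mere Hodge-to-de Rham degeneration, is genuinely used.
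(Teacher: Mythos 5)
The first thing to say is that the paper does not prove this theorem at all: it is quoted as a known result, with the characteristic-zero case cited to Lazarsfeld, the $W_2$-lifting case to Deligne--Illusie, and the refinements (the boundary case $\dim X=p$ and the globally $F$-split case) to Achinger--Suh. Your sketch reconstructs, in outline, exactly the arguments of those sources --- the Bochner--Kodaira--Nakano identity or reduction mod $p$ for (1), and the Deligne--Illusie decomposition combined with Serre vanishing for (2) --- so in substance you follow the same route the paper relies on, and the overall plan is sound.

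That said, the sketch as written would not establish the theorem in the stated generality, and the places where it falls short are precisely the points covered by the paper's citation of Achinger--Suh. First, your claim that for $\dim X\le p$ the truncation $\tau_{<p}F_*\Omega^\bullet_{X/k}$ ``captures the entire de Rham complex'' is false at the boundary $\dim X=p$: the complex lives in degrees $0,\dots,p$, so $\tau_{<p}$ misses the top term, and splitting off $\Omega^p_{X'}[-p]$ requires a separate duality argument; your closing remark concedes the issue but contradicts the earlier claim rather than repairing it. Second, the comparison between the decomposition and the stupid filtration must be iterated --- $h^m(N)\le h^m(N^{\otimes p})\le h^m(N^{\otimes p^2})\le\cdots$ --- since a single Frobenius twist is generally not enough for Serre vanishing (or its Serre-dual form) to kick in. Third, your account of the $F$-split case is backwards: global $F$-splitting is not a ``weaker structure costing one unit,'' since $\dim X\le p+1$ is a \emph{stronger} range than $\dim X\le p$; the actual mechanism is that an $F$-splitting yields decomposability one degree further ($\tau_{\le p}$ rather than $\tau_{<p}$), and the assertion that the splitting ``realizes each $\Omega^i_{X'}$ as a direct summand of the corresponding cohomology sheaf'' is vacuous, because the Cartier isomorphism already identifies that cohomology sheaf with $\Omega^i_{X'}$; what has to be produced is a splitting in the derived category. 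None of this is fatal to the plan, but filling these gaps is exactly the non-routine content of the references the paper points to.
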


To check Bott vanishing in positive characteristic
for the new cases in this paper, the following lemma
is helpful.

\begin{lemma}
\label{kanlemma}
The 18 smooth Fano 3-folds studied
in this paper satisfy KAN vanishing in every characteristic.
\end{lemma}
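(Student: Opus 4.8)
The plan is to split by characteristic and apply the two sufficient conditions in Theorem \ref{kan}. In characteristic zero there is nothing to do: KAN vanishing holds for every smooth projective variety by Theorem \ref{kan}(1). So fix a positive characteristic $p$ and an ample line bundle $L$; we must show $H^j(X,\Omega^i_X\otimes L)=0$ whenever $i+j>3$. The key observation is that $\dim X=3$ sits exactly at the boundary of both criteria in Theorem \ref{kan}(2). When $p\geq 3$ we have $\dim X=3\leq p$, so it suffices to lift $X$ to $W_2(k)$; when $p=2$ we have $\dim X=3=p+1$, so it suffices to show that $X$ is globally $F$-split. Thus the lemma reduces to two assertions: each of the 18 Fano 3-folds lifts to $W_2(k)$ in every characteristic $p\geq 3$, and each is globally $F$-split in characteristic $2$.

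For the lifting statement I would use that all 18 varieties are given explicitly as blow-ups of standard rational 3-folds ($\P^3$, the quadric $Q$, the flag 3-fold $W=GL(3)/B$, $\P^1\times\P^2$, or smooth toric 3-folds) along smooth centers --- points, lines, conics, or elliptic curves --- that are themselves defined over $\Z$. Each ambient variety and each center has good reduction at every odd prime, and the blow-up of a smooth center in a smooth variety is again smooth and proper over the base; hence every such $X$ has a smooth proper model over a mixed-characteristic discrete valuation ring with residue field $k$, and this yields the desired lift to $W_2(k)$. (Equivalently: these are among the known Fano 3-folds, all of which lift to characteristic zero, and a characteristic-zero lift in particular gives a $W_2(k)$-lift.) The only point that must be verified case by case is that the chosen center stays smooth and the blow-up stays Fano after reduction mod $p$ for each $p\geq 3$, including $p=3$, where the boundary inequality $\dim X\leq p$ is tight.

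The harder half is global $F$-splitting in characteristic $2$, and this is where I expect the main obstacle. By Fedder's criterion, a smooth projective 3-fold $X$ over a perfect field of characteristic $2$ is globally $F$-split as soon as one can produce an anticanonical divisor $D\in|-K_X|$ together with a closed point $x$ at which, in local coordinates $x_1,x_2,x_3$, the defining equation of $D$ has lowest-degree part $x_1x_2x_3$ --- that is, $D$ has an ordinary triple point, where three smooth branches meet with three distinct tangent planes. I would construct such a divisor in each case by exhibiting a reducible anticanonical member $D=A+B+C$ whose three components pass through a common point with independent tangent directions; for the blow-ups this can be arranged by pulling back a boundary-type anticanonical divisor from the (toric or near-toric) base and correcting by the exceptional divisor. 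The toric boundary $\sum D_j\in|-K|$ already has precisely this shape at every torus-fixed point, which both re-proves $F$-splitting for the toric examples and guides the construction on the non-toric ones. The genuine work, and the step I would treat most carefully, is carrying out this anticanonical construction uniformly across all 18 varieties and checking that the required triple-point configuration really occurs on $X$, and survives reduction to characteristic $2$. Combining the three cases --- Theorem \ref{kan}(1) in characteristic zero, the $W_2(k)$-lifts with Theorem \ref{kan}(2) for $p\geq 3$, and global $F$-splitting with Theorem \ref{kan}(2) for $p=2$ --- then gives KAN vanishing in every characteristic.
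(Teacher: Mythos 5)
Your proposal is correct and follows essentially the same route as the paper: characteristic zero is immediate, $W_2$-lifting handles $p\geq 3$ since $\dim X=3\leq p$, and global $F$-splitting handles $p=2$ since $\dim X=3\leq p+1$, with the splitting produced by an anticanonical divisor that is formally a union of three coordinate hyperplanes at a point. The only differences are cosmetic: the paper cites Mehta--Ramanathan rather than a Fedder-type criterion for the triple-point condition, proves $F$-splitting in all positive characteristics at once by writing $-K_X$ as a sum of three basepoint-free line bundles (with the single exception (2.26), where one uses the effective decomposition $(H-E)+H+H$), and, like you, defers the routine case-by-case transversality check.
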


\begin{proof}
All the known smooth Fano 3-folds in characteristic $p>0$
lift to $W_2$, which implies KAN vanishing for $p>2$. Moreover,
the 18 smooth Fano 3-folds studied in this paper are globally $F$-split
in every characteristic $p>0$, and so they satisfy KAN vanishing even
in characteristic 2. Indeed, Mehta and Ramanathan
showed that a smooth projective variety $X$ in positive characteristic
is globally $F$-split if there is a section of $-K_X$ whose zero
scheme has completion at some point isomorphic to $x_1\cdots x_n=0$,
where $n=\dim(X)$ \cite[Proposition 7]{MR}.

This condition is easy to check
for all 18 of the Fano 3-folds studied in this paper. Indeed,
in all cases except (2.26), $-K_X$ can be written as a sum
of three basepoint-free line bundles $L_1+L_2+L_3$. Then general sections
of $L_1,L_2,L_3$ are smooth. (In characteristic $p>0$, this is not
automatic from Bertini's theorem, but one checks it easily in each case.)
One also checks in each case that the resulting three
smooth divisors can be taken to meet transversely
at some point.
So $X$ is globally $F$-split.
For the rigid Fano 3-fold of type (2.26)
(section \ref{2.26}), $-K_X$ is the sum
of three effective line bundles $(H-E)+H+H$, with each one representable by
a smooth divisor in $X$. These three divisors can be taken to meet
transversely at some point. So again $X$ is globally $F$-split.
\end{proof}

By KAN vanishing, the 18 Fano 3-folds studied
in this paper have $H^j(X,TX)=H^j(X,\Omega^2_X\otimes K_X^*)=0$
for $j>1$.
These 18 Fano 3-folds are also known (in every characteristic not 2)
to be rigid, meaning that $H^1(X,TX)=0$. These results
will be part of the proof that these 18 Fano 3-folds
satisfy Bott vanishing in every characteristic not 2.

\section{Cases where Bott vanishing fails}
\label{fail}

In this section, we give Talon Stark's proof
that Bott vanishing fails for all
smooth Fano 3-folds other than the 37 in Theorem \ref{main}.
This will appear in Stark's Ph.D.\ thesis. We also give
the analogous results in positive characteristic.
Finally, we explain the related calculations
by Belmans, Fatighenti, and Tanturri.

The smooth complex Fano 3-folds were classified into 105
deformation types
by Iskovskikh and Mori-Mukai \cite{Iskovskikh1, Iskovskikh2,
MM}. A standard reference is \cite[Tables 12.3-12.6]{IP}.
The Big Table in \cite[section 6]{Araujo} and the web site
\cite{Fanography} are also convenient.

If a smooth Fano 3-fold satisfies Bott vanishing,
then the vector bundle $TX =\Omega^2_X\otimes K_X^*$ has zero cohomology
in positive degrees. In particular, the Euler characteristic
$\chi(X,TX)$ must be nonnegative. By Riemann-Roch,
this is a characteristic number of $X$, namely
$$\chi(X,TX)=\frac{1}{2}c_1^3-\frac{19}{24}c_1c_2+\frac{1}{2}c_3.$$
The invariants of Fano 3-folds tabulated in \cite{IP}
are the degree $(-K_X)^3$, the Picard number $\rho$ (equal
to the second Betti number $b_2$),
and the Hodge number $h^{2,1}$. We have
$c_1c_2=24$ (since $\chi(X,O)=c_1c_2/24$), $c_1^3=(-K_X)^3$,
and $c_3=\chi_{\topsub}(X)=2+2\rho-2h^{2,1}$. Therefore,
$\chi(X,TX)=\frac{1}{2}(-K_X)^3-18+b_2-h^{2,1}$.
This is negative for 57 of the 105 deformation classes of Fano
3-folds; so those do not satisfy Bott vanishing.
The 48 deformation classes of Fano 3-folds with $\chi(X,TX)\geq 0$ are:
(1.15)--(1.17), (2.26)--(2.36), (3.13)--(3.31), (4.3)--(4.12),
(5.1)--(5.3), (6.1), and (7.1).

Next, some Fano 3-folds are not rigid even though $\chi(X,TX)\geq 0$.
Bott vanishing implies that $H^1(X,TX)=0$, and so it
fails in such a case. Let $V_5\subset \P^6$ denote the quintic del Pezzo
3-fold (which is rigid), a smooth codimension-3 linear section
of the Grassmannian $\Gr(2,5)\subset \P^9$.
First, (2.26) is the blow-up of $V_5\subset \P^6$
along a line. The Hilbert scheme of lines on $V_5$ is isomorphic
to $\P^2$, and there are two orbits of lines under $\Aut(V_5)$.
For special lines, the normal bundle is $O(1)\oplus O(-1)$,
while for general lines the normal bundle is $O\oplus O$
\cite[Lemma 2.2.6]{KPS}. Thus, the special blow-up
is not rigid and hence does not satisfy Bott vanishing. We will
see that the general blow-up, which is rigid, does satisfy Bott vanishing
(section \ref{2.26}).

Next, (2.28) is the blow-up of $\P^3$ along a plane cubic curve; that
is clearly not rigid, because plane cubics have a 1-dimensional
moduli space. Likewise, (3.14) is the blow-up of $\P^3$ at a plane cubic
and a disjoint point,
and so it is not rigid. Next, (3.13) is the intersection
of three divisors in $(\P^2)^3$ of degrees $(1,1,0)$, $(1,0,1)$,
and $(0,1,1)$. There is a 1-dimensional moduli space of such 3-folds,
described in \cite[Lemma 5.19.7]{Araujo}, and so they are not rigid.
Finally, (7.1) is $\P^1$ times a quartic del Pezzo surface, which
is not rigid. 

Thus there are 44 rigid Fano 3-folds, up to isomorphism:
(1.15)--(1.17), (2.26)--(2.27), (2.29)--(2.36), (3.15)--(3.31), (4.3)--(4.12),
(5.1)--(5.3), and (6.1). This information can also be found
in \cite{Araujo}.

For seven of these, Bott vanishing fails, using an ample line
bundle other than $-K_X$. The failure
of Bott vanishing was known for the quadric 3-fold (1.16)
\cite[section 4.1]{BTLM},
the flag manifold $W=GL(3)/B$ (2.32) \cite[section 4.2]{BTLM},
and the quintic del Pezzo
3-fold $V_5$ (1.15)
(\cite[Lemma 7.10]{AWZ2} or \cite[section 7]{Torres}).

The earlier arguments can be simplified a bit: a Riemann-Roch
calculation suffices to disprove Bott vanishing.
(The Macaulay2 package Schubert2 is convenient for these
calculations \cite{M2}.)
Namely, for the quadric
3-fold $Q$, we have $-K_X\cong O(3)$ and $\chi(X,\Omega^2(1))=-1$.
For the flag manifold $W$, a divisor of degree $(1,1)$
in $\P^2\times \P^2$, we have $-K_W=2A+2B$ (with $A$ and $B$
the pullbacks of $O(1)$ from the two $\P^2$'s), and
$\chi(W,\Omega^2(A+B))=-1$. For the quintic del Pezzo 3-fold
$X=V_5\subset \P^6$, we have $-K_X=O(2)$ and $\chi(X,\Omega^2_X(1))
=-3$.

Bott vanishing fails for four other rigid Fano 3-folds,
as follows. (2.27) is the blow-up $X$ of $\P^3$ along a twisted cubic
curve. Here $-K_X=4H-E$ (where $H$ is the pullback of $O(1)$
from $\P^3$ and $E$ is the exceptional divisor).
But the ``smaller'' line bundle $L=3H-E$ is also ample,
and $\chi(X,\Omega^2_X\otimes L)=-2<0$.
(2.29) is the blow-up of the quadric 3-fold $Q$ along a conic. Here
$L=2H-E$ is ample and ``smaller'' than $-K_X=3H-E$, and
$\chi(X,\Omega^2_X\otimes L)=-2$. (2.31) is the blow-up of $Q$
along a line. Here $L=2H-E$ is ample and ``smaller'' than $-K_X=
3H-E$, and $\chi(X,\Omega^2_X\otimes L)=-1$. Finally,
(3.17) is a divisor
of degree $(1,1,1)$ in $\P^1\times \P^1\times \P^2$.
The line bundle $L=A+B+C$ is ample and ``smaller'' than
$-K_X=A+B+2C$, and $\chi(X,\Omega^2_X\otimes L)=-1$.

We will show that the 37 other rigid Fano 3-folds satisfy Bott vanishing.
We know this for the 18 toric Fanos (listed in Theorem \ref{main}).
We also know Bott vanishing for (6.1), the product of $\P^1$ with
the quintic del Pezzo surface \cite[Theorem 2.1, Lemma 2.3]{Totarobott}.
It remains to prove Bott vanishing for the 18 other Fano 3-folds
listed in Theorem \ref{main}.

We will prove these results for the known Fano 3-folds in every
characteristic other than 2. In characteristic 2, some of these 37 Fano
3-folds have non-reduced automorphism group scheme, essentially
because of the distinctive features of conics in characteristic 2.
(In particular, the subgroup scheme of $PGL(3)$ that preserves
a conic and a general point in $\P^2$ is not reduced, in characteristic 2.)
It follows that $X$ is not rigid in such a case, meaning that
$H^1(X,TX)\neq 0$. These cases are (2.26),
(3.15), (3.18),
(3.21), (4.3), (4.4), (4.5), and (5.1). (They are still
``set-theoretically rigid'' in the sense that they are isomorphic
to nearby varieties over an algebraically closed field.
This situation is analyzed in \cite[Theorem 0.2]{EHS}.)
Two others are rigid but do not satisfy Bott vanishing
in characteristic 2, (2.30) and (3.19).
Our arguments give that
the remaining 27 Fano 3-folds do satisfy Bott vanishing in characteristic 2.

The examples above suggest the question:

\begin{question}
\label{nefquestion}
Let $X$ be a smooth Fano variety which is rigid, meaning
that $H^1(X,TX)=0$. Is $H^j(X,\Omega^i_X\otimes K_X^*\otimes L)=0$
for all $j>0$, $i\geq 0$, and nef line bundles $L$?
\end{question}

Here rigidity is a necessary assumption, since
$H^1(X,TX)=H^1(X,\Omega^{n-1}_X\otimes K_X^*)$.
Question \ref{nefquestion} is compatible with the evidence
in this paper. Namely, in each case (above)
where Bott vanishing fails
for some rigid Fano 3-fold,
it always involves an ample
line bundle which is ``smaller'' than $-K_X$, in particular
not of the form $-K_X+L$ with $L$ nef.

In view of the isomorphism $\Lambda^iTX\cong \Omega^{n-i}_X\otimes K_X^*$,
Question \ref{nefquestion} can be rephrased in terms of the groups
$H^j(X,\Lambda^iTX\otimes L)$ for $L$ nef.
The arguments in this paper work by reducing this problem
to the case where $L$ is trivial, that is,
computing the cohomology groups of ``polyvector fields'',
$H^j(X,\Lambda^iTX)$. These groups are closely
related to the Hochschild cohomology of $X$. The cohomology
of polyvector fields was
computed for all smooth Fano 3-folds in characteristic zero
by Belmans, Fatighenti, and Tanturri \cite{BFT, Fanography}.

\section{Inductive approach to Bott vanishing}
\label{indsection}

To prove Bott vanishing for a given variety $X$ means
proving the vanishing of higher cohomology for the bundles
$\Omega^i_X$ tensored with all ample line bundles.
Intuitively, this should be hardest for the ``smallest'' ample
line bundles on $X$. In this section, we give a simple procedure
for deducing Bott vanishing for ``bigger'' ample line bundles
from smaller ones. The method works well in all our examples.

\begin{lemma}
\label{induction}
Let $X$ be a smooth projective variety over a field.
Let $D$ be a smooth divisor in $X$ that satisfies Bott vanishing.
(For example, $D$ could be a toric variety.) Let $L$ be a line
bundle on $X$ such that $L-D$ and $L$ are ample. If $X$ satisfies
Bott vanishing for $L-D$, then it satisfies Bott vanishing
for $L$.
\end{lemma}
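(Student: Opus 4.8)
The plan is to use the short exact sequence relating $\Omega^i_X$ to its restriction along the divisor $D$, twisted by the line bundle $L$. Since $D$ is a smooth divisor in the smooth variety $X$, for each $i$ we have the conormal (adjunction) sequence
\[
0\to \Omega^i_X(-D)\to \Omega^i_X\to \Omega^i_{X|D}\to 0,
\]
where $\Omega^i_{X|D}$ denotes the restriction $\Omega^i_X\otimes O_D$. Tensoring with the ample line bundle $L$ gives
\[
0\to \Omega^i_X\otimes(L-D)\to \Omega^i_X\otimes L\to \Omega^i_{X|D}\otimes L\to 0,
\]
and the associated long exact sequence in cohomology reduces the vanishing of $H^j(X,\Omega^i_X\otimes L)$ for $j>0$ to two inputs: the vanishing of $H^j(X,\Omega^i_X\otimes(L-D))$, which is exactly the hypothesis that $X$ satisfies Bott vanishing for the ample line bundle $L-D$, and the vanishing of $H^j(D,\Omega^i_{X|D}\otimes L)$ for $j>0$.

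Everything then comes down to controlling the middle term restricted to $D$. First I would handle $\Omega^i_{X|D}$ via the second fundamental short exact sequence of Kähler differentials for the inclusion $D\hookrightarrow X$, namely $0\to N^*_{D/X}\to \Omega^1_{X|D}\to \Omega^1_D\to 0$ with $N^*_{D/X}=O_D(-D)$. Taking $i$-th exterior powers yields a filtration of $\Omega^i_{X|D}$ whose graded pieces are $\Omega^{i-1}_D\otimes O_D(-D)$ and $\Omega^i_D$. So after tensoring by $L$ and restricting to $D$, the graded pieces are $\Omega^{i-1}_D\otimes O_D(L-D)$ and $\Omega^i_D\otimes O_D(L)$. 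By the hypothesis that $D$ satisfies Bott vanishing, and since the restrictions $O_D(L)$ and $O_D(L-D)$ are ample on $D$ (as $L$ and $L-D$ are ample on $X$), each graded piece has vanishing higher cohomology on $D$. A short induction on the filtration length then gives $H^j(D,\Omega^i_{X|D}\otimes L)=0$ for all $j>0$, which is the missing input.

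The main obstacle to watch is the positivity of the restricted line bundles: one needs $O_D(L)$ and $O_D(L-D)$ to be genuinely ample on $D$ so that $D$'s own Bott vanishing applies. Ampleness restricts to ampleness, so $O_D(L)$ is ample because $L$ is; the potentially delicate point is $O_D(L-D)$, which is ample since $L-D$ is ample on $X$ by hypothesis. (It is essential here that we assume Bott vanishing for $L-D$ and not merely that $L-D$ is ample, since $D$ need not be contained in the picture in any other way.) Assembling the two vanishing inputs in the long exact sequence then forces $H^j(X,\Omega^i_X\otimes L)=0$ for all $j>0$ and all $i\geq 0$, which is Bott vanishing for $L$. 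The only care needed is to run the argument uniformly over all $i$, so that the restriction term is handled by the full Bott vanishing hypothesis on $D$ rather than for a single exterior power.
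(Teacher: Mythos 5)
Your proof is correct and is essentially the same as the paper's: both use the restriction sequence $0\to \Omega^i_X\otimes L(-D)\to \Omega^i_X\otimes L\to \Omega^i_X|_D\otimes L\to 0$ to reduce to the term on $D$, and then the conormal sequence $0\to O_D(-D)\otimes\Omega^{i-1}_D\to \Omega^i_X|_D\to \Omega^i_D\to 0$ (your two-step filtration is exactly this short exact sequence, since the conormal bundle is a line bundle) together with Bott vanishing on $D$ for the ample restrictions of $L$ and $L-D$. No gaps; nothing further is needed.
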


\begin{proof}
The assumption means that $H^j(X,\Omega^i_X\otimes L(-D))=0$
for all $j>0$ and $i\geq 0$. We have an exact sequence
of coherent sheaves, $0\to O_X(-D)\to O_X\to O_D\to 0$.
Tensoring with $\Omega^i_X$ and taking cohomology,
we get an exact sequence
$$H^j(X,\Omega^i_X\otimes L(-D))\to H^j(X,\Omega^i_X\otimes L)
\to H^j(D,\Omega^i_X\otimes L).$$
Let $j>0$ and $i\geq 0$; then we are given that the first group here
is zero. In order to show that $H^j(X,\Omega^i_X\otimes L)=0$
as we want, it suffices to show that $H^j(D,\Omega^i_X\otimes L)=0$.

We have an exact sequence $0\to O_D(-D)\to \Omega^1_X|_D
\to \Omega^1_D\to 0$ of vector bundles on $D$. Taking
exterior powers, it follows that $0\to O_D(-D)\otimes
\Omega^{i-1}_D\to \Omega^i_X|_D
\to \Omega^i_D\to 0$. So the vanishing we want follows
if $H^j(D,\Omega_D^{i-1}\otimes L(-D))$
and $H^j(D,\Omega^i_D\otimes L)$ are zero. Since $L$
and $L(-D)$ are ample, both groups vanish by Bott vanishing
on $D$.
\end{proof}

\begin{remark}
Lemma \ref{induction} simplifies the proof of Bott vanishing
for the quintic del Pezzo surface \cite[Theorem 2.1]{Totarobott}.
Namely, this induction reduces the problem to the vanishing
of $H^1(X,\Omega^1_X\otimes K_X^*)=H^1(X,TX)$, which holds
because $X$ is rigid (or by a direct calculation). We will give
more details of the analogous reduction for Fano 3-folds
in the rest of the paper.
\end{remark}

\section{Higher direct images of differential forms}

Most Fano 3-folds arise as blow-ups of a simpler variety
along a point or a curve.
In order to prove Bott vanishing in such a case,
we need to analyze the cohomology
of bundles of differential forms twisted by a line bundle
on a smooth blow-up.

\begin{conjecture}
\label{relative}
Let $\pi\colon X\to Y$ be a projective birational morphism
between smooth varieties over a field, and let $A$ be a line bundle
on $X$ that is ample over $Y$. Then $R^j\pi_*(\Omega^i_X\otimes A)=0$
for all $j>0$ and $i\geq 0$.
\end{conjecture}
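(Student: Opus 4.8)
The plan is to prove Conjecture \ref{relative} by reducing from the general birational morphism to the elementary building blocks provided by factorization, and then to verify vanishing for each block by local computation. By the weak factorization theorem, any projective birational morphism $\pi\colon X\to Y$ of smooth varieties factors (after blowing up) into a sequence of blow-ups and blow-downs along smooth centers, so a first reduction is to the case where $\pi$ is the blow-up of $Y$ along a smooth subvariety $Z\subset Y$. One must check, however, that the conjectured vanishing is compatible with composition and behaves well under the intermediate varieties that appear; since $A$ is only assumed ample over $Y$, not over the intermediate targets, this reduction is not completely formal, and I expect it to require care rather than being automatic.

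Granting the reduction to a single blow-up $\pi\colon X=\Bl_Z Y\to Y$ with exceptional divisor $E$, the second step is to compute $R^j\pi_*(\Omega^i_X\otimes A)$ directly. The key tool is the exact sequence on a blow-up relating $\Omega^i_X$ to $\pi^*\Omega^i_Y$ together with terms supported on $E$. Away from $E$, the map $\pi$ is an isomorphism, so all the higher direct images are supported on the center $Z$, and the problem becomes a fiberwise computation over $Z$: the fibers of $E\to Z$ are projective spaces $\P^{c-1}$ where $c=\operatorname{codim}(Z,Y)$. Restricting $\Omega^i_X$ to $E$ and to its fibers, using the structure of $E$ as a projective bundle $\P(N_{Z/Y})$ over $Z$, reduces everything to computing cohomology of twists of differential forms along projective-space fibers.

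The third step is the fiberwise vanishing itself. On each fiber $\P^{c-1}$, the relevant sheaves are built from $\Omega^j_{\P^{c-1}}$ twisted by $O(k)$ with $k>0$ (positivity coming from the assumption that $A$ is $\pi$-ample, hence has positive degree on the fibers of $E\to Z$). By Bott vanishing on projective space, $H^j(\P^{c-1},\Omega^i\otimes O(k))=0$ for $j>0$ and $k>0$, which is precisely the input needed to kill the higher direct images fiber by fiber. Combining this with the filtration of $\Omega^i_X|_E$ coming from the conormal sequence for $E\subset X$ and the relative Euler sequence for $E\to Z$, I would run an induction on $i$, reducing each graded piece to a tensor of a sheaf pulled back from $Z$ with a fiberwise-acyclic twist on $\P^{c-1}$, and then apply the projection formula to conclude $R^j\pi_*=0$ for $j>0$.

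The main obstacle I anticipate is the first step: deducing the general case from the blow-up case. Weak factorization produces blow-downs as well as blow-ups, and for a blow-down the hypothesis that $A$ is ample over the \emph{final} target $Y$ does not directly say anything about ampleness over the intermediate varieties, so the clean fiberwise argument does not obviously transport. One likely needs either a Leray-type spectral sequence argument handling the composition $X\to Y'\to Y$, combined with a perturbation of $A$ by a $\pi$-nef divisor (where Proposition \ref{nef} may help on the relevant toric fibers), or a more intrinsic cohomological characterization that does not pass through factorization at all. Resolving this compatibility under composition, rather than the individual blow-up computation, is where I expect the real difficulty to lie, and it is consistent with the statement being posed as a conjecture rather than a theorem.
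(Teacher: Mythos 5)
First, a point of orientation: the statement you are trying to prove is posed in the paper as a \emph{conjecture}, and the paper does not prove it. What the paper establishes is two special cases: the toric case (Theorem \ref{toric}, due to Fujino), and the case of a single blow-up of a smooth variety along a smooth subvariety (Corollary \ref{higher}), which is deduced from the toric case by passing to the algebraic closure, completing $Y$ at a point, and observing that the blow-up of affine space along a linear subspace is a toric morphism. Your proposal is therefore not in competition with a full proof in the paper; judged on its own terms, it correctly reproduces the known special case and correctly identifies why the general case remains open.

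For the single blow-up $X=\Bl_Z Y\to Y$, your route is genuinely different from the paper's. You compute fiberwise: filter $\Omega^i_X|_E$ via the conormal sequence for $E\subset X$ and the relative Euler sequence for $E=\P(N_{Z/Y})\to Z$, twist by a $\pi$-ample $A$ (which on fibers $\P^{c-1}$ is $O(k)$, $k>0$), and kill higher cohomology by Bott vanishing on projective space; formal-functions or base-change arguments then handle the thickenings of the fibers. This works, and it is more self-contained than the paper's argument, which outsources all the cohomology to Fujino's toric theorem. What the paper's reduction buys is brevity and uniformity: one does not need to track the filtration of $\Omega^i_X|_{nE}$ across infinitesimal neighborhoods, since the toric statement already packages all of that.

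The genuine gap is the one you flagged, and it is worth stating sharply why it cannot be repaired by the tools at hand. Even in the best situation --- a composition $X\xrightarrow{\pi_1} Y'\xrightarrow{\pi_2} Y$ of two smooth blow-ups with $A$ ample over $Y$ --- knowing the conjecture for $\pi_1$ only tells you that $R\pi_{1*}(\Omega^i_X\otimes A)$ is concentrated in degree $0$; to run Leray you would then need the vanishing of $R^j\pi_{2*}$ applied to the sheaf $\pi_{1*}(\Omega^i_X\otimes A)$, and this sheaf is \emph{not} of the form $\Omega^i_{Y'}\otimes(\text{relatively ample})$. The paper's own Propositions \ref{point} and \ref{blowup} make this failure explicit: $\pi_{1*}(\Omega^i_X(-E))$ is a proper subsheaf of $\Omega^i_{Y'}$ cut out by vanishing conditions along the center, so the inductive hypothesis does not apply to it. Weak factorization makes things worse, not better, since it introduces blow-\emph{downs}, for which relative ampleness over $Y$ gives no control at all on the intermediate varieties. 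So your step one is not merely ``not completely formal'' --- there is no known way to make it formal, which is exactly why the statement is Conjecture \ref{relative} rather than a theorem. The paper sidesteps this only for certain iterated blow-ups (as in its section on the Fano 3-fold (5.1)) where the whole composition is, locally after completion, a toric morphism, so that Theorem \ref{toric} applies to the composite directly rather than stage by stage.
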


More generally, I expect Conjecture \ref{relative} to hold
for $Y$ with toroidal singularities. But even the question
with $Y$ smooth over a field of characteristic zero
seems not to have been raised before.
For $i+j>n=\dim(X)$ (KAN-type vanishing), the conjecture holds
in characteristic zero with no assumptions on the singularities of $Y$
\cite[Corollary 2.1.2]{Arapura}. For $i=1$ and $j=n-1$,
the Steenbrink-type vanishing theorem of
\cite[Theorem 14.1]{GKKP} is a similar statement.

The main evidence for Conjecture \ref{relative} is that it holds
for toric morphisms, by Fujino (Theorem \ref{toric}). (Toric morphisms need
not be birational, but our main interest here is in the birational
case.)
Since Conjecture \ref{relative} can be checked after completing
the base at a point, the case of toric morphisms implies
the conjecture for the blow-up of any smooth subvariety along
a smooth subvariety (Corollary \ref{higher}).
Theorem \ref{toric} implies the conjecture
for some iterated blow-ups as well.
Such iterated blow-ups occur in several examples where we check
Bott vanishing, including the hardest case,
the Fano 3-fold (5.1) (section \ref{iterated}).

\begin{theorem}
\label{toric}
(Fujino \cite[Theorem 5.2]{Fujino})
Let $\pi\colon X\to Y$ be a projective toric morphism
over a field $k$, with $X$ smooth over $k$.
Let $A$ be a line bundle
on $X$ that is ample over $Y$. Then $R^j\pi_*(\Omega^i_X\otimes A)=0$
for all $j>0$ and $i\geq 0$.
\end{theorem}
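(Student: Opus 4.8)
The plan is to follow Fujino by reducing the relative statement, via a toric compactification, to his logarithmic Bott vanishing, Theorem \ref{log}. First I would note that the assertion is local on $Y$, since $R^j\pi_*(\Omega^i_X\otimes A)$ is a sheaf on $Y$. So I may assume $Y=U_\sigma$ is an affine toric variety (replacing $X$ by $\pi^{-1}(U_\sigma)$). Then $R^j\pi_*(\mathcal{F})$ is the quasicoherent sheaf on $Y$ associated to the module $H^j(X,\mathcal{F})$, and it vanishes if and only if that module does. Moreover, over an affine base a relatively ample line bundle for a projective morphism is ample; thus $A$ is ample on $X$. The upshot is that Theorem \ref{toric} reduces to the following absolute statement: if $X$ is a smooth toric variety that is projective over an affine toric variety, and $A$ is an ample line bundle on $X$, then $H^j(X,\Omega^i_X\otimes A)=0$ for all $j>0$ and $i\ge 0$. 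The only feature distinguishing this from the classical Danilov--Steenbrink theorem is that $X$ need not be complete.

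To reach a complete variety, I would compactify the toric morphism. Extend the fan of $X$ to a complete fan and refine it to be smooth, producing a smooth complete toric variety $\bar X\supseteq X$ with a toric morphism $\bar X\to\bar Y$ extending $\pi$; write $E=\bar X\setminus X$ for the reduced toric divisor added in compactifying, and $D$ for the remaining toric boundary of $\bar X$. The cohomology of $\Omega^i_X\otimes A$ on the open subset $X=\bar X\setminus E$ should be computed on $\bar X$ by allowing logarithmic poles along $E$, that is, in terms of $\Omega^i_{\bar X}(\log(D+E))$ twisted by an extension of $A$. Relative ampleness of $A$ over $Y$ should moreover let one arrange an extension with $\bar A\sim L-E$ and $L$ ample on $\bar X$, which is exactly the twist $L-E$ appearing in Theorem \ref{log}. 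Granting this, Theorem \ref{log} gives the desired vanishing on $\bar X$, which then descends to $X$ and hence to $R^j\pi_*(\Omega^i_X\otimes A)$ near the relevant torus-fixed point of $Y$.

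A more self-contained alternative uses the logarithmic resolution from the proof of Proposition \ref{nef}. Tensoring that resolution with $A$, the terms other than $\Omega^i_X\otimes A$ are direct sums of the sheaves $A|_{D_J}$ over the torus-invariant strata $D_J$, because the logarithmic differentials with respect to the full boundary are trivial. Each $D_J$ is again toric and projective over an affine base, and $A|_{D_J}$ is ample, so the $i=0$ case (classical relative toric vanishing) gives $H^q(D_J,A|_{D_J})=0$ for $q>0$. The hypercohomology spectral sequence of the resolution then collapses onto its bottom row, identifying $H^m(X,\Omega^i_X\otimes A)$ with the $m$-th cohomology of the complex of global sections of the resolution.

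The hard part will be the final combinatorial vanishing. In the resolution approach one must show that this complex of global sections is exact in positive degrees; decomposing everything according to the character lattice $M$, each graded piece becomes the reduced cohomology of a subcomplex of the fan cut out by $A$, and strict convexity of the support function of the ample bundle $A$ is exactly what forces these reduced cohomologies to vanish. In the compactification approach, the analogous difficulty is arranging the completion so that the required group is literally an instance of Theorem \ref{log}, with the correct relation $\bar A\sim L-E$, and then justifying the passage from cohomology on $\bar X$ to the higher direct images $R^j\pi_*$. I expect the compactification route to be the more efficient, since it offloads the combinatorics onto the already-stated Theorem \ref{log}.
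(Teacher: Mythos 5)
Your reduction to an affine base is correct (and is also the first step of Fujino's argument), but both routes you offer stop exactly where the theorem's content begins, and the compactification route has an error in its setup. For coherent cohomology it is not true that $H^j(X,\Omega^i_X\otimes A)$ ``is computed on $\bar X$ by allowing logarithmic poles along $E$'': since $X$ is the complement of a divisor, the inclusion $j\colon X\hookrightarrow \bar X$ is an affine morphism, and the correct statement is the direct-limit formula $H^j(X,\Omega^i_X\otimes A)=\varinjlim_m H^j(\bar X,\Omega^i_{\bar X}(\log E)\otimes \bar A(mE))$ for an extension $\bar A$ of $A$; log-comparison statements of Deligne type concern de Rham hypercohomology, not the individual groups $H^j(\Omega^i\otimes A)$. (There is also a sheaf mismatch: $\Omega^i_{\bar X}(\log(D+E))$ restricts on $X$ to log differentials along the toric boundary of $X$, not to $\Omega^i_X$; the instance of Theorem \ref{log} you need is the one with $D=0$ and poles only along the compactifying divisor.) Feeding the limit into Theorem \ref{log} then requires $\bar A+(m+1)E$ to be ample for arbitrarily large $m$; since $\frac{1}{m}\bigl(\bar A+(m+1)E\bigr)$ is then ample and converges to $E$, this forces the boundary $E$ to be nef. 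So the ``hard part'' you defer is not bookkeeping: you must construct a smooth projective toric completion $\bar X\to \bar Y$ with nef boundary on which some extension of $A$ becomes ample after adding enough of $E$ --- plausible (truncate the unbounded moment polyhedron of $A$ by suitable halfspaces, then resolve without subdividing the cones of the fan of $X$), but that construction is precisely the work, and it is absent. The resolution route is sounder in outline --- the Ishida-type resolution, the triviality of the log bundles, and the collapse of the hypercohomology spectral sequence are all fine, granting the $i=0$ relative vanishing on the strata --- but its last step, exactness in positive degrees of the global-section complex via strict convexity, is the entire theorem: it is the relative, non-complete version of Danilov's argument (the support of the fan here is only a convex cone, the preimage of $\sigma$), and you assert it in one sentence rather than prove it.

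For comparison: the paper does not prove Theorem \ref{toric} at all; it quotes it from Fujino \cite{Fujino}, whose proof differs from both of your routes and avoids these difficulties. After the same reduction to an affine base, Fujino uses the toric multiplication map $F_\ell\colon X\to X$ induced by multiplication by $\ell$ on the cocharacter lattice, the characteristic-free analogue of the toric Frobenius splitting of \cite{BTLM}. One has $F_\ell^*A\cong A^{\otimes \ell}$, and $\Omega^i_X$ is a direct summand of $F_{\ell *}\Omega^i_X$; hence by the projection formula $H^j(X,\Omega^i_X\otimes A)$ is a direct summand of $H^j(X,\Omega^i_X\otimes A^{\otimes \ell})$, which vanishes for $\ell\gg 0$ by relative Serre vanishing over the affine base. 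No compactification and no fan combinatorics are needed; if you want a self-contained proof of the relative statement, that is the mechanism to carry out.
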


\begin{remark}
A further generalization is that Theorem \ref{toric} holds
even when the toric variety $X$ is singular, with $\Omega^i_X$
replaced by the sheaf of reflexive differentials. Moreover,
for $X$ singular, we can allow $A$ to be an ample Weil divisor,
replacing $\Omega^i_X\otimes A$ in the statement by
the reflexive sheaf $(\Omega^i_X\otimes O(A))^{**}$. For projective
toric varieties, Fujino proved Bott vanishing in this generality
\cite[Proposition 3.2]{Fujino}.
\end{remark}

\begin{corollary}
\label{higher}
Let $Y$ be a smooth variety
over a field $k$, $S$ a smooth subvariety of $Y$,
and $\pi\colon X\to Y$ the blow-up along $S$. Let $E$ be the exceptional
divisor in $X$, and let $m$ be a positive integer.
Then the higher direct image sheaves $R^j\pi_*(\Omega^i_X(-mE))$
are zero for all $j>0$, $i\geq 0$, and $m>0$.
\end{corollary}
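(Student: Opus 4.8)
The plan is to reduce Corollary \ref{higher} to Theorem \ref{toric} by a local computation. The key observation is that the statement is local on the base $Y$, and that the higher direct image sheaves $R^j\pi_*(\Omega^i_X(-mE))$ can be computed after completing $Y$ at a point of $S$. Near a point of $S$, the pair $(Y,S)$ looks étale-locally (or after completion) like $(\mathbb{A}^n, \mathbb{A}^c)$ for $c = \dim S$, namely an affine space containing a coordinate linear subspace. For this model, the blow-up $X = \Bl_S Y$ and the morphism $\pi$ are toric, with the exceptional divisor $E$ being a torus-invariant divisor. So I would cast the blow-up along a smooth center as a toric morphism in the relevant local model and then invoke Fujino's theorem directly.

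First I would make precise the reduction to the completion. Since $\pi$ is proper and $Y$ is Noetherian, the formation of $R^j\pi_*$ commutes with flat base change, and in particular with completion at a point $y \in Y$ by the theorem on formal functions (or flat base change along $\Spec \widehat{O}_{Y,y} \to Y$). Away from $S$ the morphism $\pi$ is an isomorphism and $E$ is empty, so the sheaf $R^j\pi_*(\Omega^i_X(-mE))$ is supported on $S$ and it suffices to check vanishing of its stalks (completed) at points of $S$. Second, I would use that the completion of the blow-up is the blow-up of the completion: forming the blow-up commutes with flat base change, so $X \times_Y \Spec \widehat{O}_{Y,y}$ is the blow-up of $\Spec \widehat{O}_{Y,y}$ along the completed center, and $\Omega^i_X$ restricts compatibly. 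Because $S$ is smooth in $Y$ smooth, the completed local pair is isomorphic to the formal completion of $(\mathbb{A}^n,\mathbb{A}^c)$ at the origin.

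Third, with the local model in hand, I would identify the relevant blow-up as toric. The blow-up of $\mathbb{A}^n$ along the linear subspace $\{x_1 = \cdots = x_{n-c} = 0\}$ is a smooth toric variety, and the projection to $\mathbb{A}^n$ is a projective toric morphism; the exceptional divisor $E$ is torus-invariant. Taking $A = O_X(-mE)$ with $m>0$ gives a line bundle that is relatively ample over the base (the map $\pi$ contracts precisely the curves in the fibers of $E \to S$, and $-E$ is relatively ample because it is the tautological relatively ample bundle on the $\mathbb{P}^{n-c-1}$-bundle $E$; positive multiples stay relatively ample). Thus Theorem \ref{toric} applies and yields $R^j\pi_*(\Omega^i_X(-mE)) = 0$ for $j>0$ and $i\ge 0$ in the local model, hence for the completed stalks, hence globally.

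The main obstacle I expect is the careful bookkeeping in the reduction to the toric local model, rather than any deep input: one must check that $\Omega^i_X$ on the completed (formal) blow-up agrees with the algebraic $\Omega^i$ of the toric model, and that relative ampleness of $-mE$ is genuinely preserved under completion. The smoothness of both $Y$ and $S$ is essential here so that the completed pair is literally the coordinate model with no correction terms; this is what makes the identification with a toric morphism exact rather than merely approximate. Once these compatibilities are verified, the vanishing is immediate from Fujino's result, and no separate computation of the cohomology on the fibers is needed.
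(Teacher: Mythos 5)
Your proposal is correct and takes essentially the same approach as the paper's proof: reduce to the completion of $Y$ at a point of $S$, identify the local model as the blow-up of affine space along a linear coordinate subspace (a projective toric morphism with $O(-E)$, hence $O(-mE)$, ample over the base), and apply Theorem \ref{toric}. The compatibility checks you flag (base change for $R^j\pi_*$, blow-up commuting with completion, relative ampleness under completion) are exactly the reductions the paper treats as immediate in its one-paragraph argument.
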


\begin{proof}
It suffices to prove this after passing to the algebraic closure
of $k$ and completing $Y$ at a point.
So we can assume that $S$ is a linear subspace
of an affine space $Y$. In that case, the blow-up $X\to Y$
is a toric morphism. Also, the line bundle $O(-E)$ on $X$ is ample
over $Y$. So Theorem \ref{toric} implies
that $R^j\pi_*(\Omega^i_X(-mE))=0$ for all $j>0$, $i\geq 0$,
and $m>0$.
\end{proof}

For applications, we also want to compute $\pi_*(\Omega^i_X(-E))$.
We first consider the blow-up at a point. Proposition \ref{blowup}
will extend this
to the blow-up along a higher-dimensional smooth subvariety.

\begin{proposition}
\label{point}
Let $Y$ be a smooth variety of dimension $n$
over a field $k$, $p$ a $k$-point of $Y$,
and $\pi\colon X\to Y$ the blow-up at $p$. Let $E$ be the exceptional
divisor in $X$. Then
$\pi_*(\Omega^1_X(-E))$ is the subsheaf $\Omega^1_Y\otimes I_{p/Y}$
of $\Omega^1_Y$. For $i\geq 2$, $\pi_*(\Omega^{i}_X(-E))$ is equal
to $\Omega^{i}_Y$.
\end{proposition}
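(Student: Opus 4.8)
The plan is to reduce the statement to an explicit computation on the blow-up of affine space and to track divisibility by the local equation of the exceptional divisor. Since the claim equates coherent subsheaves, it may be checked Zariski-locally on $Y$, so I take $Y=\mathbb{A}^n$ with coordinates $x_1,\dots,x_n$, the point $p$ the origin, and $I_{p/Y}=(x_1,\dots,x_n)$. The first structural point is that $\pi_*(\Omega^i_X(-E))$ is naturally a subsheaf of $\Omega^i_Y$: any local section of $\Omega^i_X(-E)\subseteq \Omega^i_X$ over $\pi^{-1}(U)$ restricts, over the locus $U\setminus\{p\}$ where $\pi$ is an isomorphism, to a regular $i$-form, and this form extends across $p$ because $\Omega^i_Y$ is locally free (hence reflexive) and $p$ has codimension $n\geq 2$ in the normal variety $Y$. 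Thus the section equals $\pi^*\omega$ for a unique $\omega=\sum_I f_I\,dx_I\in\Omega^i_Y(U)$, and the entire question becomes: \emph{for which $\omega$ is $\pi^*\omega$ divisible by the local equation of $E$ on every chart of $X$?}

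To make this precise I set up the standard charts. On the chart $X_\ell$ with coordinates $y_\ell=x_\ell$ and $y_j=x_j/x_\ell$ for $j\neq\ell$, the exceptional divisor is $\{y_\ell=0\}$, the subsheaf $O_X(-E)\subseteq O_X$ is locally generated by $y_\ell$, and the coordinate differentials pull back to $dx_\ell=dy_\ell$ and $dx_j=y_j\,dy_\ell+y_\ell\,dy_j$ for $j\neq\ell$; hence $\Omega^i_X(-E)|_{X_\ell}=y_\ell\,\Omega^i_X|_{X_\ell}$. For $i=1$, writing $\omega=\sum_i f_i\,dx_i$, the pullback on $X_\ell$ equals $\big(f_\ell+\sum_{j\neq\ell}f_j y_j\big)\,dy_\ell+y_\ell\sum_{j\neq\ell}f_j\,dy_j$. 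The $dy_j$-terms are already divisible by $y_\ell$, so divisibility of the whole form reduces to divisibility of the $dy_\ell$-coefficient. Since every $x_k$ pulls back into the ideal $(y_\ell)$, reduction modulo $y_\ell$ replaces each $f_i$ by its value $f_i(p)$, and the coefficient becomes $f_\ell(p)+\sum_{j\neq\ell}f_j(p)\,y_j$; viewed as a function of the free coordinates $y_j$ this vanishes iff $f_i(p)=0$ for every $i$. Conversely, if all $f_i\in(x_1,\dots,x_n)$ then each pulls back into $(y_\ell)$ and $\pi^*\omega$ is divisible by $y_\ell$. So $\pi_*(\Omega^1_X(-E))$ is exactly the subsheaf of forms with all coefficients vanishing at $p$, namely $\Omega^1_Y\otimes I_{p/Y}$.

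For $i\geq 2$ I would show that there is no condition at all, via a short wedge computation giving that $\pi^*(dx_I)$ is divisible by $y_\ell^{\,i-1}$ on each chart $X_\ell$. If $\ell\in I$, the summands $y_k\,dy_\ell$ die against the leading $dy_\ell$, leaving $\pi^*(dx_I)=\pm\,y_\ell^{\,i-1}\,dy_\ell\wedge\bigwedge_{k\in I,\,k\neq\ell}dy_k$; if $\ell\notin I$, expanding $\bigwedge_{k\in I}(y_k\,dy_\ell+y_\ell\,dy_k)$ kills every term containing two factors $dy_\ell$, so each surviving term carries at least $i-1$ factors of $y_\ell$. Since $i-1\geq 1$, every $\pi^*(dx_I)$ lies in $y_\ell\,\Omega^i_X$, whence $\pi^*\omega\in\Omega^i_X(-E)$ for all $\omega$. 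Combined with the containment $\pi_*(\Omega^i_X(-E))\subseteq\Omega^i_Y$ from the first paragraph, this gives $\pi_*(\Omega^i_X(-E))=\Omega^i_Y$.

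The main obstacle is the wedge bookkeeping that isolates exactly why $i=1$ differs from $i\geq 2$: for a single differential only the ``diagonal'' term $f_\ell\,dx_\ell$ contributes a unit multiple of $dy_\ell$, so divisibility forces the condition $f_i(p)=0$, whereas for $i\geq 2$ every monomial $dx_I$ acquires a genuine extra factor of $y_\ell$ from the wedge, removing the condition. The remaining care lies in the global-to-local reduction — justifying that $\pi_*(\Omega^i_X(-E))$ really is a subsheaf of $\Omega^i_Y$ through the reflexivity/Hartogs extension across the codimension-$\geq 2$ center, and checking that the divisibility conditions imposed on the several charts $X_\ell$ all amount to the same condition, so that the local answers patch to the stated global identification.
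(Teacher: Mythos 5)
Your proposal is correct and is essentially the paper's own argument: both pull back the coordinate differentials $dx_i$ on blow-up charts and observe that for $i=1$ divisibility by the equation of $E$ (equivalently, vanishing of the restriction to $E$ in $\Omega^1_X|_E$) forces the coefficients to vanish at $p$, while for $i\geq 2$ every wedge of the pulled-back $dx_i$ automatically acquires a factor of the exceptional equation since $dx_1\wedge dx_1=0$. The only differences are presentational — you verify all charts and spell out the Hartogs/reflexivity step identifying $\pi_*(\Omega^i_X(-E))$ as a subsheaf of $\Omega^i_Y$, whereas the paper works on a single chart (which suffices, as the form's restriction to the irreducible divisor $E$ vanishes iff it vanishes on a dense open subset) and leaves that identification implicit.
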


It would be interesting to compute explicitly the filtration of
each vector bundle $\Omega^i_Y$
by the subsheaves $\pi_*(\Omega^i_X(-mE))$ for $m\geq 0$. That is,
filter the differential forms on $Y$ by their order of vanishing
along $E\subset X$. We will need only a few cases,
Propositions \ref{point} and \ref{blowup}.

\begin{proof}
(Proposition \ref{point})
We first show that
$\pi_*(\Omega^1_X(-E))$ is the subsheaf $\Omega^1_Y\otimes I_{p/Y}
\subset \Omega^1_Y$. In other words, we want to show that a 1-form
on a neighborhood of $p$ in $Y$ vanishes at $p$ if and only if
its pullback to $X$ vanishes as a section of $\Omega^1_X$ on $E$.
Clearly, if a 1-form vanishes at $p$, then its pullback
vanishes along $E$. For the converse, let $x_1,\ldots,x_n$
be regular functions near $p$ that form a basis for $\m_p/\m_p^2$.
On one affine chart of the blow-up $X$ (which is enough to consider),
the morphism $\pi\colon X\to Y$ is given by $(x_1,u_2,\ldots,u_n)\mapsto
(x_1,x_1u_2,\ldots,x_1u_n)$. So $dx_1$ pulls back to $dx_1$
and $dx_i$ for $2\leq i\leq n$ pulls back to $x_1du_i+u_idx_1$.
Restricting to sections of $\Omega^1_X$
on the exceptional divisor $E=\{x_1=0\}$, these sections
become $dx_1,u_2dx_1,\ldots,u_ndx_1$. Since these are linearly independent
over $k$, the 1-forms on $Y$ whose pullback vanishes along $E$
are only those that vanish at $p$.

Next, for $i\geq 2$,
let us show that the subsheaf $\pi_*(\Omega^{i}_X(-E))$
of $\Omega^{i}_Y$ is equal to $\Omega^{i}_Y$. That is, we want
to show that the pullback of every $i$-form on $Y$ vanishes as a section
of $\Omega^{n-1}_X$ on $E$. This follows from the previous paragraph's
calculation: the 1-forms $dx_1,\ldots,dx_n$ pull back
to $dx_1,u_2dx_1,\ldots,u_ndx_1$ as sections of $\Omega^1_X$ on $E$,
and any wedge product of $i\geq 2$ such 1-forms vanishes on $E$
(since $dx_1\wedge dx_1=0$).
\end{proof}

\section{The Fano 3-folds (2.30) and (3.19)}
\label{quadric}

We now check Bott vanishing for the first new cases, (2.30) and (3.19).
These are the blow-up of the quadric 3-fold at one point,
or at two non-collinear points. 
Part of the argument involves reducing to properties of toric varieties,
although other parts are special to the quadric. Most of the later cases
will reduce completely to results on toric varieties.

The proof of Bott vanishing for (2.30) and (3.19)
works in characteristic not 2. Bott vanishing actually fails
for these two varieties in characteristic 2.

Consider case (2.30) first.
Here $X$ is the blow-up of the smooth quadric 3-fold $Q$ at a point $p$.
(It is also the blow-up of $\P^3$ along a conic.)
The Picard group of $X$ is $\Z\{H,E\}$, where $H$ denotes
the pullback of $O(1)$ from $Q$ and $E$ is the exceptional divisor.
We have $-K_X=3H-2E$.

For each smooth complex Fano 3-fold, Coates, Corti, Galkin,
and Kasprzyk determined the nef cone \cite{CCGK}.
For the Fano 3-folds
in this paper, we will compute
the nef cone again. One reason is to make sure that
the proofs work in any characteristic. Another reason is that
it is convenient for our arguments
to find explicit generators
for the cone of curves.

Consider case (2.30) first.
Here $X$ is the blow-up of the smooth quadric 3-fold $Q$ at a point $p$.
(It is also the blow-up of $\P^3$ along a conic.)
The Picard group of $X$ is $\Z\{H,E\}$, where $H$ denotes
the pullback of $O(1)$ from $Q$ and $E$ is the exceptional divisor.
We have $-K_X=3H-2E$.
Let $C$ be a line
in $E\cong \P^2$.
Let $D$ be the strict transform of a line on $Q$ through $p$.
We have the intersection numbers:

\begin{tabular}{c|rr}
& $C$ & $D$  \\
\hline
$H$& 0 & 1 \\
$E$& $-1$& 1\\
\end{tabular}

So the dual basis to $C,D$ is given by $H-E,H$.
Here $H-E$ and $H$ are basepoint-free, hence nef, giving
contractions of $X$ to $\P^3$ and $Q$.
(Sections of the line bundle $H-E$ on $X$ correspond to sections
of $H$ on $Q$ that vanish at the point $p$. Basepoint-freeness
of $H-E$ on $X$ follows from the fact
that $p$ is cut out by sections of $H$, as a subscheme of $Q$.)
It follows that the closed cone of curves $\overline{\Curv(X)}$ is
$\R^{\geq 0}\{ C,D\}$, and the nef cone is spanned by $H-E$
and $H$. More strongly, 
the monoid of nef classes in $\Pic(X)$
is generated by $H$ and $H-E$. 

Since $(2H-E)\cdot C=(2H-E)\cdot D=1$,
the line bundle $2H-E$ is ample, and every ample line bundle
is $2H-E$ plus a nef divisor, hence $2H-E$ plus an $\N$-linear
combination of $H$ and $H-E$. A general divisor in the linear
system $|H|$ on $X$ is a quadric surface $\P^1\times \P^1$, and a general
divisor in $|H-E|$ is also a quadric surface. Since these are both toric,
Lemma \ref{induction} reduces Bott vanishing to the single
ample line bundle $2H-E$.

The vanishing of $H^j(X,\Omega^i_X\otimes L)$ is clear whenever
$X$ is a Fano 3-fold, $j>0$, $L$ is ample, and $i$ is 0 or 3.
Indeed, the case $i=3$ is Kodaira vanishing (part of KAN
vanishing, Lemma \ref{kanlemma}). The case $i=0$
follows from Kodaira vanishing since $-K_X$ is ample. So,
throughout this paper, we only need to consider Bott vanishing
with $i$ equal to 1 or 2.

For $i=2$, Proposition \ref{point} gives that
\begin{align*}
H^j(X,\Omega^2_X\otimes O(2H-E))
&= H^j(Q,\pi_*(\Omega^2_X(-E))\otimes O(2))\\
&= H^j(Q,\Omega^2_Q(2)).
\end{align*}
For $k$ of characteristic 2, Macaulay2 shows that $h^1(Q,\Omega^2_Q(2))=1$,
and so $h^1(X,\Omega^2_X\otimes O(2H-E))=1$. It follows that
Bott vanishing fails for (2.30) in characteristic 2.

Let us analyze the tangent bundle of the quadric $Q\subset \P^4$.
Here $Q$ is defined by a quadratic form $q$ on $k^5$,
which we can assume is $x_0x_2+x_1x_3+x_4^2$.
Since we assume that the characteristic of the base field $k$
is not 2, the associated bilinear form
on $k^5$ is nondegenerate.
Think of $Q$
as the space of isotropic lines $L$ in $k^5$. By the Euler sequence,
the tangent bundle of $\P^4$ at a point $[L]$ has $T\P^4(-1)
\cong k^5/L$. It follows that $TQ(-1)$ at a point $[L]$ in $Q$
is $L^{\perp}/L$. As a result, there is a canonical nondegenerate
symmetric bilinear form on $TQ(-1)$, $TQ(-1)\otimes TQ(-1)\to O_Q$.
So $\Omega^1_Q\cong TQ(-2)$. We also have
$TQ\cong \Omega^2_Q\otimes K_Q^*=\Omega^2_Q(3)$.

The vector bundle $\Omega^2_Q(2)=TQ(-1)$ has zero cohomology
in all degrees. To check this by hand, first note that $L:=O_Q(-1)$
has zero cohomology in all degrees, by reducing to the known cohomology
of line bundles on $\P^4$. Then observe that $L^{\perp}\subset O_Q^{\oplus
5}$ has zero cohomology in all degrees, using that $O_Q^{\oplus 5}/L^{\perp}$
is isomorphic to $L^*=O_Q(1)$. So $\Omega^2_Q(2)=TQ(-1)=L^{\perp}/L$
has zero cohomology
in all degrees, as we want. We remark that $h^1(Q,\Omega^2_Q(1))=1$,
which checks again that $Q$ itself does not satisfy Bott vanishing.

For $i=1$, Proposition \ref{point} gives that
$$H^j(X,\Omega^1_X\otimes O(2H-E))
= H^j(Q,\pi_*(\Omega^1_X(-E))\otimes O(2)).$$
The Proposition also gives a short exact sequence
$$0\to \pi_*(\Omega^1_X(-E))\to \Omega^1_Q\to \Omega^1_Q|_p\to 0.$$
So Bott vanishing for $i=1$ and the ample line bundle $2H-E$ on $X$
follows if $H^j(Q,\Omega^1_Q(2))=0$ for $j>0$ and
$H^0(Q,\Omega^1_Q(2))$ maps onto its fiber at $p$. By the isomorphism
$\Omega^1_Q\cong TQ(-2)$ above, we can rephrase the problem
in terms of the tangent bundle, which seems easier to visualize.
Namely, we want to show that $H^j(Q,TQ)=0$ for $j>0$
and that $TQ$ is spanned at $p$ by its global sections.

The fact that $H^j(Q,TQ)=0$ for $j>0$ is immediate from $Q$
being a rigid Fano variety. Namely, rigidity means
that $H^1(Q,TQ)=0$, and KAN vanishing implies
that $H^j(Y,TQ)=0$ for $j\geq 2$.
(KAN vanishing holds for the quadric 3-fold $Q$
in any characteristic, by the same argument as in Lemma \ref{kanlemma}.)
The fact that $TQ$ is spanned at $p$
by global sections follows in characteristic zero
from the homogeneity of the quadric $Q$. In any characteristic,
one can check by hand that the subspace of the Lie algebra
$\so(5)$ that fixes the point $p$ in $Q$ has codimension 3,
which proves that the map $\so(5)\to H^0(Q,TQ)$ maps onto
the fiber of $Q$ at $p$. Explicitly, for the quadratic
form $q=x_0x_2+x_1x_3+x_4^2$ over any field,
$\so(5)$ is the space of $5\times 5$ matrices
$$\begin{pmatrix}X_1 & X_2 &b_1\\X_3 & X_4 & b_2\\
a_1 & a_2 & 0
\end{pmatrix}$$
with each $X_i$ a $2\times 2$ matrix such that $X_4=-X_1^t$,
$X_2$ and $X_3$ are skew-symmetric with zeros on the diagonal,
$b_1=-2a_2^t$, and $b_2=-2a_1^t$ \cite[sections 23.4, 23.6]{Borel}.
We read off that the stabilizer of $p=[1,0,0,0,0]$ in $Q$
has codimension 3, in any characteristic.
That completes the proof (in characteristic not 2)
of Bott vanishing for (2.30), the blow-up
of $Q$ at a point.

We can also handle (3.19), the blow-up $X$
of the quadric 3-fold $Q$ at two non-collinear points $p_1$
and $p_2$. Again, Bott vanishing fails for $X$ in characteristic 2,
and so we assume that the base field $k$ has characteristic not 2.
Let $E_1$ and $E_2$ be the exceptional divisors
over $p_1$ and $p_2$. We have $\Pic(X)=\Z\{H,E_1,E_2\}$.
Since $-K_Q=3H$, we have $-K_W=3H-2E_1-2E_2$.

Let $C_1\subset E_1\cong \P^2$ and $C_2\subset E_2\cong\P^2$
be lines. Let $G_1$ be the strict transform
of a line in $Q$ through $p_1$, and $G_2$ the strict transform
of a line in $Q$ through $p_2$. We have the intersection numbers:

\begin{tabular}{c|rrrr}
& $C_1$ & $C_2$ & $G_1$ & $G_2$ \\
\hline
$H$  & 0 & 0 & 1 & 1\\
$E_1$& $-1$& 0 & 1 & 0\\
$E_2$& 0 & $-1$& 0 & 1\\
\end{tabular}

It follows that the dual cone
to $\R^{\geq 0}\{ C_1,C_2,G_1,G_2\}$ is spanned by
$H$, $H-E_1$, $H-E_2$, and $H-E_1-E_2$. (This calculation
is immediate for Magma \cite{Magma}.)
These four divisors are basepoint-free, hence nef,
giving contractions of $X$ to $Q$, $\P^3$, $\P^3$, and $\P^2$.
It follows that $\overline{\Curv(X)}=\R^{\geq 0}\{ C_1,C_2,
G_1,G_2\}$, and the nef cone is spanned by $H$, $H-E_1$,
$H-E_2$, and $H-E_1-E_2$. This is the first non-simplicial cone
we have encountered. Using Magma, we check
that the nef monoid in $\Pic(X)$ is also
generated by those four divisors.

Let $M=2H-E_1-E_2$; then $M$ has degree 1 on $C_1$, $C_2$,
$G_1$, and $G_2$. So $M$ is ample, and every ample line bundle
is $M$ plus a nef divisor, hence $M$ plus an $\N$-linear
combination of $H$, $H-E_1$, $H-E_2$, and $H-E_1-E_2$.

A general divisor in $|H|$ is a quadric surface $\P^1\times \P^1$,
which is toric. A general divisor in $|H-E_1|$ or $|H-E_2|$
is a quadric surface blown up at a point, which is also toric.
A general divisor in $|H-E_1-E_2|$ is a quadric surface
blown up at 2 non-collinear points, which is also toric.
By Lemma \ref{induction}, this reduces Bott vanishing on $X$
to the ample line bundle $M$.

That is, we want to show that $H^j(X,\Omega^1_X\otimes M)$
and $H^j(X,\Omega^2_X\otimes M)$ are zero for $j>0$.
By Proposition \ref{point}, for $\Omega^2_X$,
it is equivalent to show that
$H^j(Q,\Omega^2_Q(2))=0$ for $j>0$. We already showed this
in characteristic not 2,
in analyzing (2.30) above. As mentioned there, $h^1(Q,\Omega^2_Q(2))=1$
in characteristic 2, and so Bott vanishing actually fails
for (3.19) in characteristic 2.

For $\Omega^1_X$, Proposition \ref{point}
gives that $R\pi_*(\Omega^1_X\otimes M)\cong \Omega^1_Q(2)\otimes I_{p_1
\cup p_2/Q}$. So we have an exact sequence
$$H^0(Q,\Omega^1_Q(2))\to \Omega^1_Q|_{p_1}\oplus \Omega^1_Q|_{p_2}
\to H^1(X,\Omega^1_X\otimes L)\to H^1(Q,\Omega^1_Q(2)).$$
So it suffices to show that $\Omega^1_Q(2)$ has zero cohomology
in positive degrees, and that its global sections map onto
the sum of its fibers at $p_1$ and $p_2$. As shown above,
$\Omega^1_Q(2)$ is isomorphic to the tangent bundle of $Q$.
Here $H^j(Q,TQ)=0$ for $j>0$ because $Q$ is a rigid Fano,
as discussed above. It remains to show that the global sections
of the tangent bundle of $Q$ map onto the direct sum
of its fibers at $p_1$ and $p_2$.

In characteristic zero,
this holds because the group $O(5)$ over $\C$,
acting on the quadric $Q\subset \P^4$,
acts transitively on pairs of non-collinear points. Indeed, in terms
of the given quadratic form on $\C^5$,
such a pair corresponds to a hyperbolic plane in $\C^5$,
and the Witt Extension Theorem gives that $O(5)$ acts transitively
on the set of hyperbolic planes in $\C^5$ \cite[Theorem 8.3]{EKM}.
In any characteristic, we can assume that $q=x_0x_2+x_1x_3+x_4^2$,
$p_1=[1,0,0,0,0]$, and $p_2=[0,0,1,0,0]$. By the description
of the Lie algebra $\so(5)$ above, the subspace
that fixes $p_1$ and $p_2$ in $Q$ has codimension $3+3=6$,
in any characteristic. That is,
the map $\so(5)\to H^0(Q,TQ)$ maps onto the direct
sum of its fibers at $p_1$ and $p_2$.
That completes the proof of Bott vanishing (in characteristic not 2)
for (3.19).

\section{Higher direct images of differential forms, continued}

We now compute the higher direct images of differential
forms twisted by a line bundle for a blow-up along
a smooth subvariety (not just a point). We will use this
to check Bott vanishing for the remaining Fano 3-folds,
since each of them is the blow-up of a simpler variety
along a curve.

\begin{proposition}
\label{blowup}
Let $Y$ be a smooth variety
over a field $k$, $F$ a smooth subvariety of $Y$,
and $\pi\colon X\to Y$ the blow-up along $F$. Let $E$ be the exceptional
divisor in $X$, and let $m$ be a positive integer.
Then the higher direct image sheaves $R^j\pi_*(\Omega^i_X(-mE))$
are zero for all $j>0$ and $i\geq 0$. For $j=0$ and $m=1$,
$\pi_*(\Omega^i_X(-E))$ is the subsheaf of $\Omega^i_S$ whose sections
restricted to $\Omega^i_S|_F$ lie in the image of the product map
$\Omega^{i-2}_F|_Y\otimes_{O_Y}\Lambda^2 N_{F/Y}^*\to \Omega^i_F|_Y$.
\end{proposition}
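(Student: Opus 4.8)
The vanishing $R^j\pi_*(\Omega^i_X(-mE))=0$ for $j>0$ is exactly Corollary \ref{higher}, so the only new content is the identification of $\pi_*(\Omega^i_X(-E))$, and the plan is to rerun the local calculation that proved Proposition \ref{point}, now with a positive-dimensional center. Since the assertion can be checked after passing to the algebraic closure and completing $Y$ at a point of $F$, I would reduce to the case where $F=\{x_1=\cdots=x_c=0\}$ is a linear subspace of an affine space $Y$, with $y_1,\ldots,y_d$ coordinates along $F$ and $c=\operatorname{codim}F\geq 2$. First, because $F$ has codimension $\geq 2$ and $\Omega^i_Y$ is locally free, Hartogs extension identifies $\pi_*(\Omega^i_X(-E))$ with the subsheaf of those $i$-forms $\omega$ on $Y$ whose pullback $\pi^*\omega$ lands in $\Omega^i_X(-E)$; equivalently, using $0\to\Omega^i_X(-E)\to\Omega^i_X\to\Omega^i_X|_E\to 0$, those $\omega$ with $(\pi^*\omega)|_E=0$.

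Next I would note that this condition depends only on $\omega|_F\in\Omega^i_Y|_F$: if $\omega\in I_F\cdot\Omega^i_Y$, then every coefficient of $\omega$ pulls back to a multiple of the local equation of $E$, so $\pi^*\omega$ already vanishes along $E$. It therefore suffices to compute the kernel of the induced map $\Omega^i_Y|_F\to\pi_*(\Omega^i_X|_E)$. On the standard chart $x_a=x_1u_a$ ($2\leq a\leq c$) of the blow-up, with $E=\{x_1=0\}$, the one-forms pull back as $dx_1\mapsto dx_1$, $dx_a\mapsto u_a\,dx_1+x_1\,du_a$, and $dy_j\mapsto dy_j$; restricting to $E$, all of $dx_1,\ldots,dx_c$ become multiples of the single class $dx_1$ (with coefficients $1,u_2,\ldots,u_c$), while the $dy_j$ are unchanged.

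From here the identification is immediate. Any monomial in $\omega|_F$ involving two or more of the conormal generators $dx_a$ restricts to $0$ on $E$, since $dx_1\wedge dx_1=0$. Writing a general form as $\omega|_F=\alpha+\sum_a dx_a\wedge\beta_a+(\text{terms involving two or more }dx_a)$, with $\alpha,\beta_a$ built from the $dy_j$, its restriction to $E$ equals $\alpha+dx_1\wedge\big(\sum_a u_a\beta_a\big)$. Requiring this to vanish as a section over the whole exceptional $\P^{c-1}$-bundle, and using that $1,u_2,\ldots,u_c$ are $O_F$-linearly independent functions on the fibers, forces $\alpha=0$ and every $\beta_a=0$. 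Hence the kernel is exactly the span of monomials with at least two conormal factors, which is intrinsically the image of $\Omega^{i-2}_Y|_F\otimes_{O_Y}\Lambda^2 N^*_{F/Y}\to\Omega^i_Y|_F$ (the subbundle $N^*_{F/Y}\subset\Omega^1_Y|_F$ being spanned by $dx_1,\ldots,dx_c$). Pulling this back along $\Omega^i_Y\to\Omega^i_Y|_F$ gives the asserted subsheaf.

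The one genuinely load-bearing point—the analogue of the linear-independence step in Proposition \ref{point}—is the last one: I must impose vanishing over the entire exceptional divisor, not merely at a generic fiber point, so that the affine-linear dependence on the fiber coordinates $u_a$ separates the $\beta_a$ and prevents the $p\leq 1$ monomials from cancelling one another. The remaining steps (Hartogs extension, reduction of the condition to $\omega|_F$, and consistency of the monomial description across the other charts $x_a\neq 0$) are routine, the last following from the symmetry among $x_1,\ldots,x_c$ together with the fact that $\alpha$ and the $\beta_a$ are globally defined on $F$.
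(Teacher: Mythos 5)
Your proposal is correct and takes essentially the same approach as the paper: the vanishing of $R^j\pi_*(\Omega^i_X(-mE))$ is quoted from Corollary \ref{higher}, and the identification of $\pi_*(\Omega^i_X(-E))$ is obtained by completing at a point so that $F$ becomes a linear subspace of affine space, pulling back $dx_a$ and $dy_j$ in a single affine chart of the blow-up, and concluding that the kernel in $\Omega^i_Y|_F$ is exactly the span of monomials with at least two conormal factors. Your added care about the Hartogs identification of $\pi_*\Omega^i_X$ with $\Omega^i_Y$ and about imposing vanishing over the whole exceptional divisor (the $O_F$-linear independence of $1,u_2,\ldots,u_c$) simply makes explicit what the paper leaves implicit.
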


To describe the subsheaf $\pi_*(\Omega^i_X(-E))$ in more detail:
the vector bundle $\Omega^i_Y|_F$ is filtered with quotients
$\Omega^i_F$ (on top), then $\Omega^{i-1}_F\otimes N_{F/Y}^*$,
then $\Omega^{i-2}_F\otimes \Lambda^2 N_{F/Y}^*$, and so on.
The quotient sheaf $\Omega^i_Y/\pi_*(\Omega^i_X(-E))$ is a vector bundle
on $F$, consisting of the top two steps of the filtration. That is,
it is an extension of $\Omega^i_F$ by $\Omega^{i-1}_F\otimes N_{F/Y}^*$.

\begin{proof}
The line bundle $O(-E)$ on $X$ is ample over $Y$. So Corollary
\ref{higher} gives that $R^j\pi_*(\Omega^i_X(-mE))$ is zero
for all $j>0$, $i\geq 0$, and $m>0$.

Next, we want to describe the subsheaf $\pi_*(\Omega^i_X(-E))$
of $\Omega^i_Y$. That is, which $i$-forms on $Y$ pull back to $i$-forms
on $X$ that vanish (as sections of $\Omega^i_X$) on $E$?
We want to show that the answer is the subsheaf of $\Omega^i_Y$
of $i$-forms whose restriction to $\Omega^i_F|_Y$ lies in the image
of $\Omega^{i-2}_Y|_F\otimes \Lambda^2 N_{F/Y}^*\to \Omega^i_Y|_F$.
By replacing $k$ by its algebraic closure and completing
in the neighborhood of a point,
it suffices to check this statement when $F=A^b\times\{0\}$ and $X=A^b\times
A^{n-b}$, as above. Let $x_1,\ldots,x_{n-b}$ be coordinates
on $A^{n-b}$ and $y_1,\ldots,y_b$ coordinates on $F$.

Clearly $i$-forms on $Y$ that vanish in $\Omega^i_Y|_F$
pull back to $i$-forms that vanish in $\Omega^i_X|_E$.
So it suffices to consider $i$-forms on $Y$ that are $O_F$-linear
combinations of wedge products of $dx_i$'s and $dy_j$'s.
To see whether such a form pulls back to one that vanishes on $E$,
we can work in a single affine chart of the blow-up of $Y$ along $F$,
as in the proof of Proposition \ref{point}. The blow-up map
$\pi\colon X\to Y$ is given in this chart by $(x_1,u_2,
\ldots,u_{n-b},y_1,\ldots,y_b)\to (x_1,x_1u_2,\ldots,x_1u_{n-b},
y_1,\ldots,y_b)$. So, pulling back and restricting to $\Omega^i_X|_E$,
$dx_1$ maps to $dx_1$, $dx_i$ maps to $u_idx_1$ for $1\leq i\leq n-b$,
and $dy_i$ pulls back to $dy_i$. It follows that the $i$-forms
that pull back to zero in $\Omega^i_X|_E$ are spanned by those with
at least two $dx_i$ factors. That is the statement we want.
\end{proof}

Let us spell out what Proposition \ref{blowup} says
in the main case used in this paper,
the blow-up of a 3-fold along a curve.

\begin{corollary}
\label{curve}
Let $\pi\colon X\to Y$
be the blow-up of a smooth 3-fold along a smooth curve $F$
over a field. Then $R^j\pi_*(\Omega^i_X(-mE))=0$
for $j>0$, $i\geq 0$, and $m>0$. Also, $\pi_*(\Omega^1_X(-E))$
is the kernel of the surjection $\Omega^1_Y\to \Omega^1_Y|_F$,
and $\pi_*(\Omega^2_X(-E))$ is the kernel of the surjection $\Omega^2_Y
\to \Omega^1_F\otimes N_{F/Y}^*$.
\end{corollary}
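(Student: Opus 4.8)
The plan is to read off all three assertions directly from Proposition \ref{blowup} by specializing to $\dim Y = 3$ and $\dim F = 1$, where the filtrations involved collapse to very few pieces. The vanishing statement $R^j\pi_*(\Omega^i_X(-mE))=0$ for $j>0$ is the first assertion of Proposition \ref{blowup} verbatim (equivalently, it follows from Corollary \ref{higher}, since $O(-E)$ is ample over $Y$), so there is nothing further to do there. All the content is in identifying the two pushforward sheaves $\pi_*(\Omega^1_X(-E))$ and $\pi_*(\Omega^2_X(-E))$ as kernels of explicit restriction and projection maps.

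First I would write down the conormal exact sequence $0\to N_{F/Y}^*\to \Omega^1_Y|_F\to \Omega^1_F\to 0$ and take exterior powers, using crucially that $F$ is a curve, so that $\Omega^1_F$ is a line bundle and $N_{F/Y}^*$ has rank $2$. Thus $\Omega^1_Y|_F$ is an extension of the line bundle $\Omega^1_F$ by $N_{F/Y}^*$, while $\Omega^2_Y|_F$ is an extension of the rank-two bundle $\Omega^1_F\otimes N_{F/Y}^*$ (the quotient) by the line bundle $\Lambda^2 N_{F/Y}^*$ (the distinguished subbundle); the would-be top piece $\Omega^2_F$ vanishes because $F$ is one-dimensional.

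For $i=1$, the product map of Proposition \ref{blowup} has source $\Omega^{-1}_Y|_F\otimes \Lambda^2 N_{F/Y}^*=0$, so $\pi_*(\Omega^1_X(-E))$ consists exactly of the $1$-forms on $Y$ whose restriction to $\Omega^1_Y|_F$ vanishes, that is, the kernel of the surjection $\Omega^1_Y\to \Omega^1_Y|_F$; this is the second assertion. For $i=2$, the image of the product map $\Omega^0_Y|_F\otimes \Lambda^2 N_{F/Y}^*\to \Omega^2_Y|_F$ is precisely the rank-one subbundle $\Lambda^2 N_{F/Y}^*$ identified above, which is the kernel of the quotient map $\Omega^2_Y|_F\to \Omega^1_F\otimes N_{F/Y}^*$. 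Hence $\pi_*(\Omega^2_X(-E))$ is the preimage of $\Lambda^2 N_{F/Y}^*$ under restriction to $F$, equivalently the kernel of the composite surjection $\Omega^2_Y\to \Omega^2_Y|_F\to \Omega^1_F\otimes N_{F/Y}^*$, which is the third assertion.

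There is no serious obstacle here; the only point requiring care is matching Proposition \ref{blowup}'s description (the image of a product map landing in $\Omega^i_Y|_F$) with the clean ``kernel of a surjection'' formulation stated in the corollary. This matching is exactly the observation that, in the curve case, the relevant image is the bottom subbundle of the exterior-power filtration, so that its preimage under restriction is the kernel of the projection onto the complementary quotient $\Omega^1_F\otimes N_{F/Y}^*$.
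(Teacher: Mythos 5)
Your proposal is correct and is essentially the paper's own reasoning: the paper gives no separate proof of Corollary \ref{curve}, presenting it as a direct specialization of Proposition \ref{blowup} (together with the remark that the quotient $\Omega^i_Y/\pi_*(\Omega^i_X(-E))$ is the extension of $\Omega^i_F$ by $\Omega^{i-1}_F\otimes N_{F/Y}^*$), which is exactly what you do. Your identification of the collapse of the filtration when $\dim F=1$ --- the product map having zero source for $i=1$, and its image being the bottom piece $\Lambda^2 N_{F/Y}^*$ with complementary quotient $\Omega^1_F\otimes N_{F/Y}^*$ for $i=2$ (since $\Omega^2_F=0$) --- is accurate and complete.
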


The following lemma works very efficiently to prove the base case
of Bott vanishing in most of our examples.

\begin{lemma}
\label{ci}
Let $X$ be the blow-up of a smooth projective toric variety $Y$
along a smooth codimension-2 subvariety $F$ that is a complete intersection
$S_1\cap S_2$ in $Y$.  Let $E$ be the exceptional
divisor on $X$.
\begin{enumerate}
\item
Suppose that $-K_Y$, $-K_Y-S_1$,
and $-K_Y-S_2$ are ample, and $-K_Y-S_1-S_2$ is nef.
Then $H^j(X,\Omega^1_X(-K_X))=0$ for $j>0$.
\item Let $L$ be a line bundle on $Y$
such that $L$, $L-S_1$, and $L-S_2$ are ample,
and $L-S_1-S_2$ is nef.
Then $H^j(X,\Omega^1_X(\pi^*(L)-E))=0$ for $j>0$.
\item
Let $L$ be a line bundle on $Y$
such that $L$ and $L-S_1$ are ample.
Suppose that $S_1$ is a toric variety
(not necessarily torically embedded in $Y$), $(L-S_2)|_{S_1}$ is ample,
and $(L-S_1-S_2)|_{S_1}$ is nef.
Then $H^j(X,\Omega^1_X(\pi^*(L)-E))=0$ for $j>0$.
\end{enumerate}
\end{lemma}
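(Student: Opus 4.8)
The plan is to reduce all three parts to cohomology computations on the toric base $Y$ (and, for part (3), on the toric divisor $S_1$), where the Danilov–Steenbrink theorem and Proposition \ref{nef} apply directly. First I would observe that part (1) is the special case $L=-K_Y$ of part (2): since $F$ has codimension $2$, the blow-up formula gives $K_X=\pi^*K_Y+E$, so that $-K_X=\pi^*(-K_Y)-E$, and the two hypothesis sets coincide. Hence it suffices to treat (2) and (3). The common first step is a Leray reduction. Writing $\Omega^1_X(\pi^*L-E)=\Omega^1_X(-E)\otimes\pi^*L$, the projection formula gives $R^j\pi_*(\Omega^1_X(-E)\otimes\pi^*L)=R^j\pi_*(\Omega^1_X(-E))\otimes L$. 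By Proposition \ref{blowup} these higher direct images vanish for $j>0$, and $\pi_*(\Omega^1_X(-E))$ is the kernel of $\Omega^1_Y\to\Omega^1_Y|_F$, that is, $\Omega^1_Y\otimes I_{F/Y}$. So the Leray spectral sequence degenerates and
$$H^j\big(X,\Omega^1_X(\pi^*L-E)\big)\cong H^j\big(Y,\Omega^1_Y\otimes I_{F/Y}\otimes L\big).$$
Everything now reduces to killing the right-hand group for $j>0$.

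For part (2), I would resolve the ideal sheaf of the complete intersection $F=S_1\cap S_2$ by its Koszul complex, tensor with the locally free sheaf $\Omega^1_Y\otimes L$, and obtain the short exact sequence
$$0\to\Omega^1_Y(L-S_1-S_2)\to\Omega^1_Y(L-S_1)\oplus\Omega^1_Y(L-S_2)\to\Omega^1_Y\otimes I_{F/Y}\otimes L\to 0.$$
In the associated long exact sequence, the term $\Omega^1_Y(L-S_1)\oplus\Omega^1_Y(L-S_2)$ has vanishing cohomology in positive degrees because $L-S_1$ and $L-S_2$ are ample (Bott vanishing on the toric $Y$). The point is that the nef sheaf $\Omega^1_Y(L-S_1-S_2)$ only ever contributes through $H^{j+1}$ for $j\ge 1$, i.e.\ through $H^{\ge 2}$, and Proposition \ref{nef} (with $i=1$) gives exactly $H^{>1}(Y,\Omega^1_Y\otimes(L-S_1-S_2))=0$ when $L-S_1-S_2$ is nef. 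So each relevant segment of the long exact sequence reads $0\to H^j\to 0$, forcing $H^j(Y,\Omega^1_Y\otimes I_{F/Y}\otimes L)=0$ for all $j>0$.

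For part (3) the class $L-S_2$ is only controlled after restriction to $S_1$, so the symmetric Koszul resolution is unavailable; instead I would peel off $S_1$ using the exact sequence $0\to O_Y(-S_1)\to I_{F/Y}\to O_{S_1}(-S_2)\to 0$, valid since $F$ is the Cartier divisor $S_2|_{S_1}$ inside $S_1$. Tensoring with $\Omega^1_Y\otimes L$ yields a term $\Omega^1_Y(L-S_1)$ on $Y$, whose higher cohomology vanishes since $L-S_1$ is ample, and a term $(\Omega^1_Y|_{S_1})\otimes(L-S_2)|_{S_1}$ living on the toric variety $S_1$. For the latter I would twist the conormal sequence $0\to O_{S_1}(-S_1)\to\Omega^1_Y|_{S_1}\to\Omega^1_{S_1}\to 0$ by $(L-S_2)|_{S_1}$: the subsheaf becomes $O_{S_1}((L-S_1-S_2)|_{S_1})$, which has no higher cohomology because this class is nef and $S_1$ is toric (Proposition \ref{nef}, case $i=0$), while the quotient $\Omega^1_{S_1}\otimes(L-S_2)|_{S_1}$ has no higher cohomology because $(L-S_2)|_{S_1}$ is ample and $S_1$ is toric (Bott vanishing on $S_1$). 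Chaining the two long exact sequences gives the vanishing on $Y$, hence on $X$.

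The main obstacle is the bookkeeping in part (3): one must check that the hypotheses were engineered so that each piece lands in a setting where an available vanishing theorem applies — ampleness for the Danilov–Steenbrink (Bott) theorem and mere nefness for the weaker Proposition \ref{nef} — and in particular that Bott vanishing is invoked intrinsically on $S_1$ as an abstract smooth projective toric variety, independent of its possibly non-toric embedding in $Y$. Matching the nef hypotheses with the $i=0$ (respectively $i=1$) case of Proposition \ref{nef}, which yields vanishing only for $j>i$, is exactly what makes both arguments close; I would also record the mild standing assumption that $S_1$ is smooth, needed for the conormal sequence and for Bott vanishing on $S_1$.
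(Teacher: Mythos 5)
Your proof is correct, and it closes using exactly the same vanishing inputs as the paper (Bott vanishing on the toric variety $Y$ for the ample classes $L-S_1$, $L-S_2$; Proposition \ref{nef} for the nef class $L-S_1-S_2$; and, in part (3), Bott vanishing and Proposition \ref{nef} on $S_1$ via the conormal sequence), but your homological organization is genuinely different. The paper also starts from the pushforward description of $\pi_*(\Omega^1_X(-E))$ (its Corollary \ref{curve}), but then runs the long exact sequence relating $H^j(X,\Omega^1_X(\pi^*L-E))$, $H^j(Y,\Omega^1_Y(L))$, and $H^j(F,\Omega^1_Y(L))$, so it must prove three things: vanishing on $Y$, surjectivity of $H^0(Y,\Omega^1_Y(L))\to H^0(F,\Omega^1_Y(L))$, and vanishing on $F$; these are obtained by restricting twice along the chain $F\subset S_1\subset Y$, each step a restriction sequence twisted by $-S_1$ or $-S_2$. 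You instead keep the whole computation on $Y$ as the single group $H^j(Y,\Omega^1_Y\otimes I_{F/Y}\otimes L)$ and resolve the ideal sheaf: by the Koszul complex in part (2), and by $0\to O_Y(-S_1)\to I_{F/Y}\to O_{S_1}(-S_2|_{S_1})\to 0$ in part (3). This buys symmetry in $S_1,S_2$, avoids all cohomology on $F$ and the global-sections surjectivity step, and in fact shows that the ampleness of $L$ itself is never used (only $L-S_1$, $L-S_2$ ample and $L-S_1-S_2$ nef), whereas the paper's route does use $L$ ample, for $H^j(Y,\Omega^1_Y(L))=0$ and the surjection onto $H^0(S_1,\Omega^1_Y(L))$. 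What the paper's chain-of-restrictions template buys in exchange is reusability: essentially the same argument is deployed elsewhere in the paper (for (2.26) and (3.16)) where $F$ is \emph{not} a complete intersection but sits in a chain $F\subset S\subset Y$, a situation your Koszul resolution cannot handle. Finally, you are right to flag smoothness of $S_1$ in part (3): the paper's own proof needs it too (for left-exactness of the conormal sequence and for Bott vanishing on $S_1$) but leaves it implicit, since in all applications $S_1$ is a smooth toric surface.
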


\begin{proof}
We have $-K_X=\pi^*(-K_Y)-E$, and so part 1 follows from part 2.
Let us prove part 2.
By Corollary \ref{curve}, it suffices to show
that (1) $H^j(Y,\Omega^1_Y(L))=0$ for $j>0$;
(2) $H^0(Y,\Omega^1_Y(L))
\to H^0(F,\Omega^1_Y(L))$ is surjective;
and (3) $H^j(F,\Omega^1_Y(L))=0$ for $j>0$.

First, we know (1) by Bott vanishing on the toric variety $Y$.
Next, consider the exact sequence $H^j(Y,\Omega^1_Y(L))
\to H^j(S_1,\Omega^1_Y(L))\to
H^{j+1}(Y,\Omega^1_Y(L-S_1))$. By Bott vanishing on $Y$,
we know that $H^j(Y,\Omega^1_Y(L))=0$
and $H^j(Y,\Omega^1_Y(L-S_1))=0$
for $j>0$. So $H^j(S_1,\Omega^1_Y(L))=0$ for $j>0$
and $H^0(Y,\Omega^1_Y(L))\to H^0(S_1,\Omega^1_Y(L))$
is surjective. Next, consider the exact sequence
$H^j(S_1,\Omega^1_Y(L))\to H^j(F,\Omega^1_Y(L))
\to H^{j+1}(S_1,\Omega^1_Y(L-S_2))$.
Statements (2) and (3) follow if we can show that $H^j(S_1,
\Omega^1_Y(L-S_2))=0$ for $j>0$.

To do that, consider the exact sequence
$H^j(Y,\Omega^1_Y(L-S_2))
\to H^j(S_1,\Omega^1_Y(L-S_2))\to
H^{j+1}(Y,\Omega^1_Y(L-S_1-S_2))$. By Bott vanishing on $Y$,
we know that $H^j(Y,\Omega^1_Y(L-S_2))=0$ for $j>0$.
Since $L-S_1-S_2$ is nef on the toric variety $Y$, Proposition
\ref{nef} gives that
$H^j(Y,\Omega^1_Y(L-S_1-S_2))=0$
for $j>1$. It follows that $H^j(S_1,\Omega^1_Y(L-S_2))=0$
for $j>0$.

To prove part 3, we reduce as before to showing that
$H^j(S_1, \Omega^1_Y(L-S_2))=0$ for $j>0$. Consider the exact
sequence $0\to O(-S_1)_{S_1}\to \Omega^1_Y|_{S_1}
\to \Omega^1_{S_1}\to 0$. Tensoring with $L-S_2$ and taking
cohomology, we have an exact sequence
$H^j(S_1,L-S_1-S_2)\to H^j(S_1,\Omega^1_Y(L-S_2))
\to H^j(S_1,\Omega^1_{S_1}(L-S_2))$.
Since $S_1$ is a toric variety, the first group is zero for $j>0$
since $(L-S_1-S_2)|_{S_1}$ is nef (Proposition \ref{nef}).
The last group
is zero for $j>0$ since $(L-S_2)|_{S_1}$ is ample. It follows
that $H^j(S_1,\Omega^1_Y(L-S_2))=0$ for $j>0$, which completes
the proof.
\end{proof}

\section{First blow-up along a curve: (2.26)}
\label{2.26}

Most Fano 3-folds are blow-ups of simpler varieties along a smooth
curve. We now prove Bott vanishing in one such case. Although part
of the method will apply to later examples, this case is relatively
far from toric varieties and hence requires individual handling.

In case (2.26), $X$ is the blow-up of the quintic del Pezzo threefold
$Y:=V_5\subset \P^6$
along a general line. We will instead use another description,
mentioned by Mori-Mukai \cite[Table 3]{MM}: $X$ is the blow-up
of the quadric 3-fold $Q$ along a twisted cubic curve $F$.
(It is easier to work with $Q$ than with $V_5$.) We assume that
$F$ is general, in the sense that the $\P^3$ it spans
is transverse to $Q\subset \P^4$; otherwise, Bott vanishing fails,
as discussed in section \ref{fail}. We work in characteristic not 2,
since $X$ is not rigid (hence does not satisfy Bott vanishing)
in characteristic 2.

We have $\Pic(X)=\{H,E\}$, and $-K_X=3H-E$. 
Let $C\cong \P^1$
be a fiber of the exceptional divisor $E\to F$.
Let $D$ be the strict transform
of a line in $Q$ that meets $F$ in two points.
We have the intersection numbers:

\begin{tabular}{c|rr}
& $C$ & $D$  \\
\hline
$H$& 0 & 1 \\
$E$& $-1$& 2\\
\end{tabular}

So the dual basis to $C,D$
is given by $2H-E,H$. 
The line bundles $2H-E$ and $H$ are basepoint-free, hence nef,
corresponding to contractions of $X$ to $V_5\subset \P^6$
and to $Q$. It follows that $\overline{\Curv(X)}=\R^{\geq 0}\{ C,D\}$,
and the nef cone is spanned by $H$ and $2H-E$. More strongly,
the nef monoid in $\Pic(X)$
is generated by $H$ and $2H-E$. Since $-K_X\cdot C=1$ and $-K_X\cdot D=1$,
every ample line bundle on $X$ is $-K_X$ plus a nef divisor,
hence $-K_X$ plus an $\N$-linear combination of $H$ and $2H-E$.

A general divisor in $|H|$ on $X$ is the blow-up of a quadric
surface $\P^1\times \P^1$ at three points with no three collinear,
hence to the quintic del Pezzo surface. That is not toric, but
it satisfies Bott vanishing. A general divisor in $|2H-E|$ is a quartic
del Pezzo surface, which does not satisfy Bott vanishing. However,
$|H-E|$ consists of one smooth quadric surface, which is toric and hence
satisfies Bott vanishing. Since $2H-E=H+(H-E)$, Lemma \ref{induction}
along with the previous paragraph
reduces Bott vanishing to the single line bundle $-K_X$.

It is easy to show that $H^j(X,\Omega^2_X\otimes K_X^*)=H^j(X,TX)$
vanishes for $j>0$. First, this is zero for $j\geq 2$ by
Kodaira-Akizuki-Nakano (KAN) vanishing (Lemma \ref{kanlemma}).
That shows in general that deformations of smooth Fano varieties
are unobstructed. The fact that $H^1(X,TX)=0$ in this case amounts
to the known rigidity of this Fano 3-fold.

It remains to show that $H^j(X,\Omega^1_X\otimes K_X^*)=0$ for $j>0$.
Corollary \ref{curve} gives an exact sequence
$$H^{j-1}(F,\Omega^1_Q(-K_Q))\to H^j(X,\Omega^1_X(-K_X))
\to H^j(Q,\Omega^1_Q(-K_Q))\to H^j(F,\Omega^1_Q(-K_Q)).$$
So the desired vanishing would follow if (1) $H^j(Q,\Omega^1_Q(-K_Q))=0$
for $j>0$, (2) $H^0(Q,\Omega^1_Q(-K_Q))\to H^0(F,\Omega^1_Q(-K_Q))$
is surjective, and (3) $H^1(F,\Omega^1_Q(-K_Q))=0$. By section
\ref{quadric}, $\Omega^1_Q$ is isomorphic to $TQ(-2)$
and $-K_Q$ is $O(3)$; so
we can restate (1)--(3) in terms of the vector bundle $TQ(1)$.

Let $S$ be the intersection of $Q\subset \P^4$ with the $\P^3$ spanned
by $F$; so $S$ is a smooth quadric surface, isomorphic to $\P^1\times \P^1$.
Write $A$ and $B$ for the two pullbacks of $O(1)$ to $S$.
Then $F$ is linearly equivalent to $A+2B$ or $2A+B$ on $S$; without loss
of generality, we can assume that $F\sim 2A+B$ on $S$.
Although $F$ is not a complete intersection in $Q$,
we can work with the chain $F\subset S\subset Q$, as follows.

First, we have an exact sequence $0\to TS\to TQ|_S\to O_S(A+B)\to 0$
on $S$, and $TS\cong O(2A)\oplus O(2B)$ since $S=\P^1\times \P^1$.
Since $O(1)$ restricted to $S$ is $O(A+B)$, we read off that
$H^j(S,TQ(1))=0$ for $j>0$. Since $Q$ is a rigid Fano variety,
we have $H^j(Q,TQ)=0$ for $j>0$. Then the exact sequence
$$H^j(Q,TQ)\to H^j(Q,TQ(1))\to H^j(S,TQ(1))\to H^{j+1}(Q,TQ)$$
 implies that $H^j(Q,TQ(1))=0$
for $j>0$, which is statement (1). Since $H^1(Q,TQ)=0$, the 
sequence also gives that $H^0(Q,TQ(1))\to H^0(S,TQ(1))$ 
is surjective.

Next, since $O_S(1)\sim A+B$ and $F\sim 2A+B$ on $S$, we have
an exact sequence:
\[
\label{2.26.1}
H^j(S,TQ(1))\to H^j(F,TQ(1))\to H^{j+1}(S,TQ|_S(-A)).\tag{*}
\]
By the previous paragraph's description of $TQ|_S$, we have
an exact sequence
$0\to O(A)\oplus O(2B-A)\to TQ|_S(-A)\to O(B)\to 0$
on $S$. By Kodaira vanishing (or just the K\"unneth formula)
on $S=\P^1\times \P^1$, we read off that $H^j(S,TQ|_S(-A))=0$
for $j>0$. So the exact sequence \eqref{2.26.1}
gives that $H^1(F,TQ(1))=0$
(statement (3)) and also that $H^0(S,TQ(1))\to H^0(F,TQ(1))$
is surjective.
Together with the previous paragraph, this proves (2).
Thus we have shown that $H^j(X,\Omega^1_X\otimes K_X^*)=0$
for $j>0$. This completes the proof of Bott vanishing for (2.26).

\section{The Fano 3-fold (3.24)}

We now prove Bott vanishing for the Fano 3-fold (3.24).
Here $X$ is the blow-up of the flag manifold $W$, a smooth divisor
of degree $(1,1)$ in $\P^2\times\P^2$, along a fiber
of the first projection.
We will instead use
a different description, mentioned by Mori-Mukai \cite[Table 3]{MM}.
Namely, $X$ is the blow-up of $Y=\P^1\times \P^2$ along a curve $F$
of degree $(1,1)$.
The advantage of this description
is that it expresses $X$ as the blow-up of a toric
variety. The proof works in any characteristic.

Writing $f_1$ and $f_2$ for the projections to $\P^1$
and $\P^2$, let $A=f_1^*O(1)$ and $B=f_2^*O(1)$.
Let $E$ be the exceptional divisor in $X$.
Then $\Pic(X)=\Z\{A,B,E\}$ and $-K_X=2A+3B-E$.
Let $C$ be a fiber of $E\to F$.
Let $D_1$ be the strict transform of a curve $\P^1\times \pt$
that meets $F$. Let $D_2$ be the strict transform
of a curve $\pt\times\text{line}$ that meets $F$.
We have the intersection numbers:

\begin{tabular}{c|rrr}
& $C$ & $D_1$ & $D_2$ \\
\hline
$A$& 0 & 1 & 0\\
$B$& 0 & 0 & 1 \\
$E$& $-1$& 1 & 1\\
\end{tabular}

It follows that the dual basis to $C,D_1,D_2$ is given
by $A+B-E,A,B$. These three line bundles
are basepoint-free, hence nef, giving contractions
of $X$ to $\P^2$, $\P^1$, and $\P^2$. It follows
that $\overline{\Curv(X)}=\R^{\geq 0}\{C,D_1,D_2\}$,
and the nef cone is spanned by $A+B-E,A,B$. More strongly,
the nef monoid in $\Pic(X)$
is generated by $A+B-E$, $A$, and $B$.

We have $-K_X=2A+3B-E$. Let $M=2A+2B-E$; then $M\cdot C=1$,
$M\cdot D_1=1$, and $M\cdot D_2=1$. So $M$ is ample, and 
every ample line bundle on $X$ is $M$ plus a nef divisor,
hence $M$ plus an $\N$-linear combination of $A+B-E$,
$A$, and $B$.

A general divisor in $|A|$ or $|A+B-E|$ 
is the blow-up
of $\P^2$ at one point, which is toric. A general divisor
in $|B|$ is the blow-up of $\P^1\times \P^1$ at one point, which is also
toric. By Lemma \ref{induction} plus the previous paragraph,
this reduces Bott vanishing on $X$ to the single line bundle $M$.

For $M$ and $\Omega^1_X$,
we need to show that $H^j(X,\Omega^1_X\otimes M)=0$
for $j>0$, where $M=2A+2B-E$.
Let $L$ be the line bundle $2A+2B$ on $Y=\P^1\times\P^2$.
The nef cone of $Y$ is spanned by $A$ and $B$.
The curve $F$ is a complete intersection $S_1\cap S_2$ in $Y$,
with $S_1\sim B$ and $S_2\sim A+B$. 
By Lemma \ref{ci}, it suffices to show that $L$,
$L-S_1$, and $L-S_2$ are ample, and that $L-S_1-S_2$ is nef.
By the nef cone of $Y$, these things are true, with $L-S_1-S_2\sim A$.

It remains to prove Bott vanishing for the ample line bundle $M$
and $\Omega^2_X$. Here $L=-K_Y-B$.
By Corollary \ref{curve}, it suffices to show:
(1) $H^j(Y,\Omega^2_Y\otimes L)=0$ for $j>0$,
(2) $H^0(Y,TY(-B))\to H^0(F,N_{F/Y}(-B))$ is surjective,
and (3) $H^1(F,N_{F/Y}(-B))=0$.
Here (1) is immediate from Bott vanishing on the toric variety $Y
=\P^1\times \P^2$. Next, the tangent bundle of $Y$
is $f_1^*(T\P^1)\oplus f_2^*(T\P^2)$, and so
$TY(-B)=f_1^*(T\P^1)\otimes O(-B)\oplus f_2^*(T\P^2(-1))$.

By the chain of inclusions
$F\subset \P^1\times \P^1\subset \P^1\times \P^2$,
the normal bundle $N_{F/Y}$ is an extension
$0\to O(2)\to N_{F/Y}\to O(1)\to 0$, and so
$N_{F/Y}\cong O(2)\oplus O(1)$. It follows that
$N_{F/Y}(-B)\cong O(1)\oplus O$.
This proves (3). For (2), we will just use the second
summand of $TY$; that is, we will prove that
$H^0(\P^2,T\P^2(-1))=H^0(Y,f_2^*(T\P^2(-1)))$
maps onto $H^0(F,N_{F/Y}(-B))$.

Here $F$ embeds as a line in $\P^2$, so we can first use
that $H^0(\P^2,T\P^2(-1))$ maps onto $H^0(F,T\P^2(-1))$,
since $H^1(\P^2,T\P^2(-2))=0$.
Next, the composition $T\P^2|_F
\subset TY|_F\surj N_{F/Y}$ is an isomorphism,
because the derivative of $f_1\colon F\to \P^1$ is nonzero
at every point. So $H^0(F,T\P^2(-1))$ maps isomorphically
to $H^0(F,N_{F/Y}(-B))$.
Thus (2) is proved.
This completes the proof of Bott vanishing for
the Fano 3-fold (3.24).

\section{The Fano 3-folds (3.15), (3.16), (3.18),
(3.20), (3.21), (3.22), (3.23)}

In these seven cases with Picard number 3,
we prove Bott vanishing efficiently
by relating each Fano variety to a toric variety.
In each case except (3.20), the Fano variety
is the blow-up of a smooth toric variety along a smooth curve.

We next prove Bott vanishing for (3.15), the blow-up
of the quadric 3-fold $Q$ along a disjoint line
and conic. We will instead use
a different description, mentioned by Mori-Mukai \cite[Table 3]{MM}.
Namely, $X$ is the blow-up of $Y=\P^1\times \P^2$ along a smooth curve $F$
of degree $(2,2)$.
(The advantage of this description
is that it expresses $X$ as the blow-up of a toric
variety along a curve.) Here $X$ is not
rigid in characteristic 2, and so we work over a field
of characteristic not 2. We can take
$F$ to be given by the equations $y_0y_1=y_2^2, x_0y_1=x_1y_0=0$
in $\P^1\times \P^2=\{([x_0,x_1],[y_0,y_1,y_2])\}$.

Here $\Pic(X)=\Z\{A,B,E\}$
and $-K_X=2A+3B-E$. Let $C$ be a fiber of $E\to F$.
Let $D_1$ be the strict transform of a curve $\P^1\times\pt$
that meets $F$ in one point. Let $D_2$ be the strict transform
of a curve $\pt\times\text{line}$ that meets $F$ in two points.
We have the intersection numbers:

\begin{tabular}{c|rrr}
& $C$ & $D_1$ & $D_2$ \\
\hline
$A$& 0 & 1 & 0\\
$B$& 0 & 0 & 1\\
$E$&$-1$& 1 & 2\\
\end{tabular}

It follows that the dual basis to $C,D_1,D_2$ is $A+2B-E,A,B$.
The line bundles $A+2B-E$, $A$, and $B$ are basepoint-free,
hence nef. Namely, they contract $X$ to the quadric 3-fold $Q$,
$\P^1$, and $\P^2$.
It follows
that $\overline{\Curv(X)}=\R^{\geq 0}\{C,D_1,D_2\}$,
and the nef cone is spanned by $A+2B-E$, $A$,
and $B$. More strongly, the nef monoid in $\Pic(X)$ is generated
by these three divisors. Since $-K_X\cdot C=-K_X\cdot D_1
=-K_X\cdot D_2=1$, every ample line bundle is $-K_X$
plus a nef divisor, hence $-K_X$ plus an $\N$-linear combination
of $A+2B-E$, $A$, and $B$.

A general divisor in $|A|$ is $\P^2$ blown up at two points,
which is toric. A general divisor in $|B|$ is a quadric
surface $\P^1\times\P^1$ blown up at two non-collinear points,
which is toric. Finally, a general
divisor in $|A+2B-E|$ is $\P^2$ blown up at 4 points with no two
collinear. So it is the quintic del Pezzo
surface (which is not toric). All these surfaces satisfy Bott vanishing.
By Lemma \ref{induction}, this reduces Bott vanishing on $X$
to the ample line bundle $-K_X$.

Since $X$ is rigid (in characteristic not 2),
we have $H^j(X,\Omega^2_X\otimes K_X^*)
=H^j(X,TX)=0$ for $j>0$. It remains to show that
$H^j(X,\Omega^1_X\otimes K_X^*)=0$ for $j>0$. Lemma \ref{ci}
does exactly what we need. Namely, the curve $F$ in $Y=\P^1\times \P^2$
was defined (above) as a complete intersection $S_1\cap S_2$
with $S_1\sim A+B$ and $S_2\sim 2B$ on $Y$. Because $-K_Y=2A+3B$,
the line bundles $-K_Y$, $-K_Y-S_1=A+2B$, and $-K_Y-S_2=2A+B$ are ample,
and $-K_Y-S_1-S_2=A$ is nef.
By Lemma \ref{ci}, it follows
that $H^j(X,\Omega^1_X\otimes K_X^*)=0$
for $j>0$.
That proves Bott vanishing for (3.15).

We now prove Bott vanishing for the Fano 3-fold (3.16).
The proof works in any characteristic.
Namely, $X$ is the blow-up of the toric variety $Y=\Bl_{\text{pt.}}\P^3$
along the strict transform $F$
of a twisted cubic through the point $p$ in $\P^3$.
We have $\Pic(Y)=\Z\{H,E_1\}$, where $E_1$ is the exceptional divisor
over $p$, and $\Pic(X)=\Z\{H,E_1,E_2\}$, where $E_2$
is the exceptional divisor over $F$. (We write $E_1$ in $X$
for the strict transform of $E_1$ in $Y$.) We have
$-K_Y=4H-2E_1$ and $-K_X=4H-2E_1-E_2$. Let $C_1$ be the strict
transform of a line in $E_1\cong \P^2\subset Y$ through the point
$E_1\cap F$. Let $C_2$ be a fiber of $E_2\to F$.
Let $D$ be the strict transform of a line in $\P^3$ through $p$
that meets the twisted cubic in another point.
We have the intersection numbers:

\begin{tabular}{c|rrr}
& $C_1$ & $C_2$ & $D$ \\
\hline
$H$& 0 & 0 & 1\\
$E_1$&$-1$& 0 & 1 \\
$E_2$& 1 &$-1$& 1\\
\end{tabular}

The dual basis to $C_1,C_2,D$ is given by
$H-E_1,2H-E_1-E_2,H$.
These three divisors are basepoint-free,
giving contractions of $X$ to $\P^2$, $\P^2$, and $\P^3$. 
It follows that $\overline{\Curv(X)}
=\R^{\geq 0}\{C_1,C_2,D\}$, and the nef cone is spanned
by $H-E_1,2H-E_1-E_2,H$.
More strongly, the nef monoid in $\Pic(X)$ is generated
by these three divisors. Since $-K_X\cdot C_1=1$, $-K_X\cdot C_2=1$,
and $-K_X\cdot D=1$, every ample line bundle on $X$
is $-K_X$ plus a nef divisor, hence $-K_X$ plus an
$\N$-linear combination of $H-E_1,2H-E_1-E_2,H$.

A general divisor in $|H|$ or $|H-E_1|$ is isomorphic to
$\P^2$ blown up at 3 non-collinear points, which is toric.
A general divisor in $|2H-E_1-E_2|$ is a quadric surface
$\P^1\times \P^1$ blown up at one point, which is toric.
By Lemma \ref{induction}, this reduces Bott vanishing on $X$
to the ample line bundle $-K_X$.

For $-K_X$ and $\Omega^2_X$, this is easy: since $X$ is rigid,
we have $H^1(X,\Omega^2_X\otimes K_X^*)=H^1(X,TX)=0$, and KAN vanishing
gives that $H^j=0$ for $j\geq 2$ (Lemma \ref{kanlemma}). It remains
to prove Bott vanishing for $-K_X$ and $\Omega^1_X$,
meaning that $H^j(X,\Omega^1_X\otimes K_X^*)=0$ for $j>0$.
By Corollary \ref{curve}, this would follow if
(1) $H^j(Y,\Omega^1_Y(-K_Y))=0$
for $j>0$, (2) $H^0(Y,\Omega^1_Y(-K_Y))\to H^0(F,\Omega^1_Y(-K_Y))$
is surjective, and (1) $H^1(F,\Omega^1_Y(-K_Y))=0$.

We cannot apply Lemma \ref{ci}, because the curve $F$ is not a complete
intersection in $Y$ (because the twisted cubic is not a complete
intersection in $\P^3$). Instead, note that the twisted cubic
is a curve of bidegree $(1,2)$ on a smooth quadric surface $\P^1\times \P^1$
in $\P^3$. Let $S$ be the strict transform of that surface
in $Y=\Bl_p\P^3$; then we can work with the chain $F\subset S\subset Y$.
The surface $S$ is isomorphic to $\Bl_p(\P^1\times \P^1)$.

Statement (1) is immediate from Bott vanishing on the toric variety $Y$.
Next, consider the exact sequence
$$H^j(Y,\Omega^1_Y(-K_Y))
\to H^0(S,\Omega^1_Y(-K_Y))\to H^{j+1}(Y,\Omega^1_Y(-K_Y-S)).$$
Here $\Pic(Y)=\Z\{H,E\}$, $-K_Y=4H-2E$, and $S\sim 2H-E$ on $Y$.
The nef cone of $Y$ is spanned by $H$ and $H-E$. So $-K_Y-S=2H-E$
is ample, and hence $H^j(Y,\Omega^1_Y(-K_Y-S))=0$ for $j>0$
by Bott vanishing on $Y$ again.
It follows that $H^j(S,\Omega^1_Y(-K_Y))=0$ for $j>0$, and that
$H^0(Y,\Omega^1_Y(-K_Y))\to H^0(S,\Omega^1_Y(-K_Y))$ is surjective.

Next, consider the exact sequence:
\[
\label{3.16.1}
H^j(S,\Omega^1_Y(-K_Y))
\to H^j(F,\Omega^1_Y(-K_Y))\to H^{j+1}(S,\Omega^1_Y|_S(-K_Y-F)). \tag{*}
\]
Here $\Pic(S)\cong \Z\{A,B,E\}$, $F\sim 2A+B-E$ on $S$,
and $-K_Y|_S=4A+4B-2E$. Also, $\Nef(S)=\R^{\geq 0}\{A,B,A+B-E\}$.
So $-K_Y-F=2A+3B-E$ is ample on $S$.

To analyze $\Omega^1_Y$ restricted to $S$, use the exact sequence
\[
\label{3.16.2}
0\to O_S(-S)\to \Omega^1_Y|_S\to \Omega^1_S\to 0, \tag{**}
\]
where
$O_S(-S)=-2A-2B+E$. The surface
$S=\Bl_p(\P^1\times \P^1)$
is a toric variety, although not torically embedded in $Y$. So
Bott vanishing on $S$ gives that $H^j(S,\Omega^1_S(-K_Y-F))=0$.
Also, $-S|_S-K_Y-F=B$, which is nef. So
$H^j(S,O(-S-K_Y-F))=0$ for $j>0$ by Proposition \ref{nef}.
By \eqref{3.16.2},
we conclude that $H^j(S,\Omega^1_Y|_S(-K_Y-F))=0$ for $j>0$.
By \eqref{3.16.1},
we deduce that $H^1(F,\Omega^1_Y(-K_Y))=0$ (which is statement
(3)) and that $H^0(S,\Omega^1_Y(-K_Y))\to H^0(F,\Omega^1_Y(-K_Y))$
is surjective. That completes the proof of statement (2).
Thus we have shown that $H^j(X,\Omega^1_X(-K_X))=0$ for $j>0$.
This completes the proof of Bott vanishing for the Fano 3-fold (3.16).

We now prove Bott vanishing for (3.18). We work in characteristic not 2,
since $X$ is not rigid in characteristic 2. Here $X$ is the blow-up
of $\P^3$ along a disjoint line $F_1$ and conic $F_2$. We have
$\Pic(X)=\Z\{H,E_1,E_2\}$ and $-K_X=4H-E_1-E_2$.
Let $C_1$ be a fiber of $E_1\to F_1$ and $C_2$ a fiber
of $E_2\to F_2$. Let $D$ be the strict transform
of a line meeting the line in one point and the conic
in two points (which exists). We have the following intersection numbers.

\begin{tabular}{c|rrr}
& $C_1$ & $C_2$ & $D$\\
\hline
$H$  & 0 & 0 & 1\\
$E_1$& $-1$& 0 & 1\\
$E_2$& 0 & $-1$& 2\\
\end{tabular}

It follows that the dual basis to $C_1,C_2,D$ is
$H-E_1$, $2H-E_2$, $H$. These three divisors
are basepoint-free, hence nef, giving contractions
of $X$ to $\P^1$, the quadric 3-fold $Q$, and $\P^3$.
So $\overline{\Curv(X)}=\R^{\geq 0}\{ C_1,C_2,D\}$,
and the nef cone is spanned by $H-E_1,2H-E_2,H$.
More strongly, the nef monoid in $\Pic(X)$ is generated
by these three divisors. Also, $-K_X$ has degree 1
on $C_1$, $C_2$, and $D$. So every ample line bundle
is $-K_X$ plus a nef divisor, hence $-K_X$ plus
an $\N$-linear combination of $H-E_1$, $2H-E_2$, $H$.

A general divisor in $|H-E|$ is the blow-up of $\P^2$
at 2 points, which is toric. A general divisor in $|2H-E_2|$
is the blow-up of a quadric surface $\P^1\times \P^1$
at 2 points, which is toric.
A general divisor in $|H|$ is the blow-up of $\P^2$
at 3 non-collinear points, which is toric.
By Lemma \ref{induction}, Bott vanishing for $X$
reduces to the single line bundle $-K_X$.

Since $X$ is rigid, we know that $H^j(X,\Omega^2_X\otimes
K_X^*)=H^j(X,TX)=0$ for $j>0$. It remains to show
that $H^j(X,\Omega^1_X\otimes
K_X^*)=0$ for $j>0$. Let $Y$ be the blow-up of $\P^3$ along a line,
so that $X$ is the blow-up of $Y$ along the disjoint conic $F_2$.
This description
has the advantage that $Y$ is toric. Here $-K_Y=4H-E_1$,
and $F_2$ is a complete intersection $S_1\cap S_2$ in $Y$
with $S_1\sim H$ and $S_2\sim 2H$. So $-K_Y$, $-K_Y-S_1=3H-E_1$,
and $-K_Y-S_2=2H-E_1$ are ample, and $-K_Y-S_1-S_2=H-E_1$ is nef.
By Lemmas \ref{ci} and \ref{nef}, it follows that
$H^j(X,\Omega^1_X\otimes
K_X^*)=0$ for $j>0$.
That completes the proof of Bott vanishing for (3.18).

We now prove Bott vanishing for (3.20). The proof
works in any characteristic. Here $X$ is the blow-up
of the quadric 3-fold $Q$ along two disjoint lines,
$F_1$ and $F_2$. We have $\Pic(X)=\Z\{H,E_1,E_2\}$
and $-K_X=3H-E_1-E_2$. Let $C_i$ be a fiber of $E_i\to F_i$,
for $i=1,2$. Let $D$ be the strict transform of a line
in $Q$ meeting $F_1$ and $F_2$, which exists.
We have the following intersection numbers.

\begin{tabular}{c|rrr}
& $C_1$ & $C_2$ & $D$\\
\hline
$H$  & 0 & 0 & 1\\
$E_1$& $-1$& 0 & 1\\
$E_2$& 0 & $-1$& 1\\
\end{tabular}

It follows that the dual basis to $C_1,C_2,D$ is
$H-E_1$, $H-E_2$, $H$. These three divisors
are basepoint-free, hence nef, giving contractions
of $X$ to $\P^2$, $\P^2$, and $Q$.
So $\overline{\Curv(X)}=\R^{\geq 0}\{ C_1,C_2,D\}$,
and the nef cone is spanned by $H-E_1,H-E_2,H$.
More strongly, the nef monoid in $\Pic(X)$ is generated
by these three divisors. Also, $-K_X$ has degree 1
on $C_1$, $C_2$, and $D$. So every ample line bundle
is $-K_X$ plus a nef divisor, hence $-K_X$ plus
an $\N$-linear combination of $H-E_1$, $H-E_2$, $H$.

A general divisor in $|H-E_1|$ or $|H-E_2|$ is the blow-up of
a quadric surface $\P^1\times \P^1$ at one point,
which is toric. A general divisor in $|H|$
is the blow-up of a quadric surface at two points,
which is also toric.
By Lemma \ref{induction}, Bott vanishing for $X$
reduces to the single line bundle $-K_X$.

Since $X$ is rigid, we know that $H^j(X,\Omega^2_X\otimes
K_X^*)=H^j(X,TX)=0$ for $j>0$. It remains to show
that $H^j(X,\Omega^1_X\otimes
K_X^*)=0$ for $j>0$. We cannot apply our usual method,
since $X$ is not the blow-up of a toric variety along a curve.
Instead, we will view $X$ as a hypersurface in a toric variety $Z$:
the blow-up of $\P^4$ along two disjoint lines.
We have $\Pic(Z)=\Z\{H,E_1,E_2\}$, with $-K_Z=5H-2E_1-2E_2$,
and $X$ is linearly equivalent to $2H-E_1-E_1$ on $Z$.

Let $L=3H-E_1-E_2$ on $Z$ (so that
$L$ restricted to $X$ is $-K_X$).
Consider the exact sequence $0\to O(L-X)|_X\to \Omega^1_Z(L)|_X
\to \Omega^1_X(L)\to 0$. 
The line bundle
$O(L-X)|_X=O(H)|_X=O(K_X+(4H-E_1-E_2))$
has zero cohomology
in degrees $>0$ on $X$, by Kodaira vanishing. (In fact, this holds
in any characteristic, by reducing to Kodaira vanishing on the toric
variety $Z$.) So the vanishing of cohomology of degree $>0$
for $\Omega^1_X(L)$ would follow from the same statement
for $\Omega^1_Z(L)|_X$. Now consider the exact sequence
$0\to \Omega^1_Z(L-X)\to \Omega^1_Z(L)
\to \Omega^1_Z(L)|_X\to 0$. Here $\Omega^1_Z(L)$ has zero cohomology
in degrees $>0$ by Bott vanishing on the toric variety $Z$
(since $L$ is ample), and $\Omega^1_Z(L-X)$ has zero cohomology
in degrees $>1$ by Proposition \ref{nef} (since $L-X=H$
is nef). It follows that $\Omega^1_Z(L)|_X$ has zero
cohomology in degrees $>0$.
That completes the proof of Bott vanishing for (3.20).

We now prove Bott vanishing for (3.21). We work in characteristic not 2,
since $X$ is not rigid in characteristic 2. Here $X$ is the blow-up
of $\P^1\times\P^2$ along a curve $F$ of degree $(2,1)$.
We can take
$F=\{y_2=0,x_1y_0^2=x_0y_1^2\}$ in $\P^1\times \P^2=\{ ([x_0,x_1],
[y_0,y_1,y_2])\}$.
Thus $F$ is contained in $\P^1\times l$, for a line $l$ in $\P^2$,
We have $\Pic(X)=\Z\{A,B,E\}$ and $-K_X=2A+3B-E$.
Let $C$ be a fiber of $E\to F$. Let $q$ be a point on the line $l$,
and let $D_1$ be the strict transform of $\P^1\times q$.
Let $D_2$ be the strict transform of $p\times l$, for some point
$p$ in $\P^1$.
We have the intersection numbers:

\begin{tabular}{c|rrr}
& $C$ & $D_1$ & $D_2$\\
\hline
$A$  & 0 & 1 & 0 \\
$B$  & 0 & 0 & 1\\
$E$  & $-1$& 1 & 2\\
\end{tabular}

We compute that the dual basis to $C,D_1,D_2$
is $A+2B-E, A,B$. 
These three divisors are basepoint-free, hence nef,
using the equation for $F$ above. They give
contractions of $X$ to
a nodal 3-fold in $\P^6$, $\P^1$, and $\P^2$.
It follows that $\overline{\Curv(X)}=\R^{\geq 0}\{C,D_1,D_2\}$,
and the nef cone is spanned by the three divisors mentioned.
By the intersection numbers, 
the nef monoid is also generated by those three divisors.
The line bundle $-K_X=2A+3B-E$ has degree 1 on all three curves
$C,D_1,D_2$. So every ample line bundle is $-K_X$
plus a nef divisor, hence $-K_X$ plus an $\N$-linear combination
of the three divisors mentioned.

A general divisor in $|A|$ is $\P^2$ blown up at 2
points, which is toric. A general divisor in $|B|$
is the blow-up of $\P^1\times \P^1$
at one point, which is toric. A general divisor
in $|A+2B-E|$ is $\P^2$ blown up at 4 points with no 3 collinear,
hence a quintic del Pezzo surface. That is not toric, but it satisfies
Bott vanishing.
By Lemma \ref{induction}, that reduces Bott vanishing for (3.21)
to the single line bundle $-K_X$.

Since $X$ is rigid, we know that $H^j(X,\Omega^2_X\otimes K_X^*)
=H^j(X,TX)$ is zero for $j>0$. It remains to show that
$H^j(X,\Omega^1_X\otimes K_X^*)$ is zero for $j>0$.
The curve $F$ is a complete intersection
$S_1\cap S_2$ in $Y=\P^1\times \P^2$, with $S_1\sim B$
and $S_2\sim A+2B$. Since $-K_Y=2A+3B$, we see that $-K_Y$,
$-K_Y-S_1$, and $-K_Y-S_2$ are ample. By Lemma \ref{ci},
the desired vanishing would follow if
$H^j(Y,\Omega^1_Y(-K_Y-S_1-S_2))=0$ for $j>1$.
Here $-K_Y-S_1-S_2=A$ is nef, and so this follows
from Proposition \ref{nef}.
That completes
the proof of Bott vanishing for (3.21).

We now prove Bott vanishing for (3.22). The proof
works in any characteristic. Here $X$ is the blow-up
of $\P^1\times\P^2$ along a conic $F$ in $p\times \P^2$,
for a point $p$ in $\P^1$. 
We can take
$F=\{x_1=0,y_0y_1=y_2^2\}$ in $\P^1\times \P^2=\{ ([x_0,x_1],
[y_0,y_1,y_2])\}$.
We have $\Pic(X)=\Z\{A,B,E\}$ and $-K_X=2A+3B-E$.
Let $C$ be a fiber of $E\to F$. Let $D_1$ be the strict
transform of a line in $p\times \P^2$. Let $D_2$ be the strict
transform of $\P^1\times q$, for a point $q$ in the conic.
We have the intersection numbers:

\begin{tabular}{c|rrr}
& $C$ & $D_1$ & $D_2$\\
\hline
$A$  & 0 & 0 & 1 \\
$B$  & 0 & 1 & 0\\
$E$  & $-1$& 2 & 1\\
\end{tabular}

We compute that the dual basis to $C,D_1,D_2$
is $A+2B-E, B,A$. 
These three divisors are basepoint-free, hence nef,
using the equation for $F$ above. They give
contractions of $X$ to
the cone in $\P^6$ over the Veronese surface in $\P^5$, to $\P^2$,
and to $\P^1$.
It follows that $\overline{\Curv(X)}=\R^{\geq 0}\{C,D_1,D_2\}$,
and the nef cone is spanned by the three divisors mentioned.
By the intersection numbers, 
the nef monoid is also generated by those three divisors.
The line bundle $-K_X=2A+3B-E$ has degree 1 on all three curves
$C,D_1,D_2$. So every ample line bundle is $-K_X$
plus a nef divisor, hence $-K_X$ plus an $\N$-linear combination
of the three divisors mentioned.

A general divisor in $|A|$ is $\P^2$, which is toric.
A general divisor in $|B|$
is the blow-up of the quadric surface $\P^1\times \P^1$
at two collinear points, which is toric. A general divisor
in $|A+2B-E|$ is $\P^2$ blown up at 4 points with no 3 collinear,
hence a quintic del Pezzo surface. That is not toric, but it satisfies
Bott vanishing.
By Lemma \ref{induction}, this reduces Bott vanishing for (3.22)
to the single line bundle $-K_X$.

Since $X$ is rigid, we know that $H^j(X,\Omega^2_X\otimes K_X^*)
=H^j(X,TX)$ is zero for $j>0$. It remains to show that
$H^j(X,\Omega^1_X\otimes K_X^*)$ is zero for $j>0$. 
Let $Y=\P^1\times \P^2$;
then $-K_Y=2A+3B$.
The curve $F$ is a complete intersection $S_1\cap S_2$ in $Y$
with $S_1\sim A$ and $S_2=2B$. So $-K_Y$, $-K_Y-S_1$,
and $-K_Y-S_2$ are ample.
By Lemma \ref{ci}, the desired vanishing would follow
if we have $H^j(Y,\Omega^1_Y(-K_Y-S_1-S_2))=0$ for $j>1$.
Here $-K_Y-S_1-S_2=A+2B$ is ample, and so this cohomology
is actually zero for $j>0$ (Theorem \ref{toric}).
That completes
the proof of Bott vanishing for (3.22).

We now prove Bott vanishing for (3.23). The proof works
in any characteristic. Here $X$ is the blow-up
of $Y:=V_7=\Bl_p\P^3$ along the strict transform $F$
of a conic through the point $p\in \P^3$. We can take
$p$ to be $[1,0,0,0]$ and the conic to be $\{x_3=0, x_1^2-x_0x_2=0\}$.
We have $\Pic(Y)=\Z\{H,E_1\}$ with $-K_Y=4H-2E_1$,
and so $\Pic(X)=\Z\{H,E_1,E_2\}$ and $-K_X=4H-2E_1-E_2$.
Let $C_1$ be the strict transform in $E_1\subset X$ of a line
through the point $E_1\cap F$ in $Y$. (I am using the same
name for the surface $E_1\cong \P^2$ in $Y$
and its strict transform in $X$.)
Let $C_2$ be a fiber of $E_2\to F$. Let $D$ be the strict
transform of a line through $p$ in $\P^3$ that meets the conic
at another point.
We have the intersection numbers:

\begin{tabular}{c|rrr}
& $C_1$ & $C_2$ & $D$\\
\hline
$H$    & 0 & 0 & 1 \\
$E_1$  & $-1$ & 0 & 1\\
$E_2$  & 1 & $-1$ & 1\\
\end{tabular}

We compute that the dual basis to $C_1,C_2,D$
is $H-E_1, 2H-E_1-E_2,H$. 
These three divisors are basepoint-free, hence nef,
giving contractions of $X$ to
$\P^2$, the quadric 3-fold $Q$, and $\P^3$.
It follows that $\overline{\Curv(X)}=\R^{\geq 0}\{C_1,C_2,D\}$,
and the nef cone is spanned by the three divisors mentioned.
By the intersection numbers, 
the nef monoid is also generated by those three divisors.
The line bundle $-K_X=4H-2E_1-E_2$ has degree 1 on all three curves
$C_1,C_2,D$. So every ample line bundle is $-K_X$
plus a nef divisor, hence $-K_X$ plus an $\N$-linear combination
of the three divisors mentioned.

A general divisor in $|H|$ or $|H-E|$ is $\P^2$ blown up
at two points, which is toric.
A general divisor in $|2H-E_1-E_2|$
is a quadric surface $\P^1\times \P^1$ blown
up at one point, which is also toric.
By Lemma \ref{induction}, this reduces Bott vanishing for (3.23)
to the single line bundle $-K_X$.

Since $X$ is rigid, we know that $H^j(X,\Omega^2_X\otimes K_X^*)
=H^j(X,TX)$ is zero for $j>0$. It remains to show that
$H^j(X,\Omega^1_X\otimes K_X^*)$ is zero for $j>0$.
We have $Y\cong \Bl_p \P^3$ and $-K_Y=4H-2E_1$.
The curve $F$ is a complete intersection $S_1\cap S_2$ in $Y$
with $S_1\sim H-E_1$ and $S_2\sim 2H-E_1$. So $-K_Y$, $-K_Y-S_1=
3H-E_1$, and $-K_Y-S_2=2H-E_1$ are ample. By Lemma \ref{ci},
the desired vanishing would follow if we have $H^j(Y,\Omega^1_Y
(-K_Y-S_1-S_2))=0$ for $j>1$. Here $-K_Y-S_1-S_1=H$ is nef,
and so that follows from Proposition \ref{nef}.
This completes
the proof of Bott vanishing for (3.23).

\section{The Fano 3-folds (4.3), (4.4), (4.5), (4.6), (4.7), (4.8)}

For these Fano 3-folds with Picard number 4, the proof
of Bott vanishing is again efficient by our methods.
Each variety is the blow-up of a smooth toric variety along a smooth curve.

Let us prove Bott vanishing for the Fano 3-fold (4.3).
We work in characteristic not 2, since $X$ is not rigid
in characteristic 2.
Here $X$ is the blow-up of $(\P^1)^3$ along a curve $F$
of degree $(1,1,2)$. 
We can take
$F=\{x_0y_1=x_1y_0,x_0^2z_1=x_1^2z_0\}$
in $(\P^1)^3=\{ ([x_0,x_1],
[y_0,y_1],[z_0,z_1])\}$.
We have $\Pic(X)=\Z\{A,B,C,E\}$ and $-K_X=2A+2B+2C-E$.
Let $D_4$ be a fiber of $E\to F$. Let $D_1,D_2,D_3$ be strict
transforms of curves $\P^1\times\pt\times\pt$,
$\pt\times\P^1\times\pt$, $\pt\times\pt\times \P^1$
that meet the curve $F$ in one point each.
We have the intersection numbers:

\begin{tabular}{c|rrrr}
& $D_1$ & $D_2$ & $D_3$ & $D_4$ \\
\hline
$A$  & 1 & 0 & 0 & 0 \\
$B$  & 0 & 1 & 0 & 0\\
$C$  & 0 & 0 & 1 & 0\\
$E$  & 1 & 1 & 1 & $-1$\\
\end{tabular}

We compute that the dual basis to $D_1,D_2,D_3,D_4$
is $A,B,C,A+B+C-E$. 
These three divisors are basepoint-free, hence nef,
using the equation for $F$ above. They give
contractions of $X$ to $\P^1$, $\P^1$, $\P^1$, and $\P^2$.
It follows that $\overline{\Curv(X)}=\R^{\geq 0}\{D_1,D_2,D_3,D_4\}$,
and the nef cone is spanned by the four divisors mentioned.
By the intersection numbers, 
the nef monoid is also generated by those four divisors.
The line bundle $-K_X=2A+2B+2C-E$ has degree 1 on all four curves
$D_1,D_2,D_3,D_4$. So every ample line bundle is $-K_X$
plus a nef divisor, hence $-K_X$ plus an $\N$-linear combination
of the four divisors mentioned.

A general divisor in $|A|$ or $|B|$ is $\P^1\times\P^1$
blown up at one point, which is toric.
A general divisor in $|C|$
if $\P^1\times\P^1$ blown up at 2 points, which is toric.
A general divisor
in $|A+B+C-E|$ is $\P^2$ blown up at 3 non-collinear points,
which is toric.
By Lemma \ref{induction}, this reduces Bott vanishing for (4.3)
to the single line bundle $-K_X$.

Since $X$ is rigid, we know that $H^j(X,\Omega^2_X\otimes K_X^*)
=H^j(X,TX)$ is zero for $j>0$. It remains to show that
$H^j(X,\Omega^1_X\otimes K_X^*)$ is zero for $j>0$. Let $Y=(\P^1)^3$,
which has $-K_Y=2A+2B+2C$.
The curve $F$ is a complete intersection $S_1\cap S_2$ in $Y$,
with $S_1\sim A+B$ and $S_2\sim 2A+C$.
By Lemma \ref{ci}, the desired vanishing would follow if
$-K_Y$ and $-K_Y-S_1$ are ample, the surface $S_1$ is toric,
$(-K_Y-S_2)|_{S_1}$ is ample, and $(-K_Y-S_1-S_2)|_{S_1}$ is nef.
Indeed, $-K_Y$ and $-K_Y-S_1=A+B+2C$ are ample, and $S_1=\{x_0y_1=x_1y_0\}
\subset (\P^1)^3$ is isomorphic
to $(\P^1)^2$, which is toric. The line bundles $A$ and $B$ become
isomorphic on $S_1$, and the nef cone of $S_1$ is spanned by $A$ and $C$.
So $(-K_Y-S_1)|_{S_1}=2A+C$ is ample, and
$(-K_Y-S_1-S_2)|_{S_1}=C$ is nef.
That completes
the proof of Bott vanishing for (4.3).

We first prove Bott vanishing for (4.4). 
We work in characteristic not 2, since $X$ is not rigid
in characteristic 2.
Here $X$ is the blow-up
of $\Bl_{p_2,p_3}Q$ (with $p_2$ and $p_3$ non-collinear points
on the quadric $Q$)
along the strict transform of a conic through $p_2$ and $p_3$.
We will use a different description of this variety,
mentioned by Mori-Mukai \cite[Table 3]{MM}. Namely,
let $Y_1$ be the blow-up of $\P^3$ along a line $F_1$, and let
$Y$ be the blow-up of $Y_1$ along the inverse image $F_2$
of a point $p\in F_1$.
Let $F_3$ be the inverse image in $Y$ of a conic in $\P^3$
disjoint from the line $F_1$. Then $X$ is the blow-up of $Y$ along $F_3$.
This description has the advantage that $Y$ is a toric variety.
We can take the point $p$ in $\P^3$ to be $[0,0,1,0]$, the line $F_1$
to be $\{x_0=x_1=0\}$, and the conic $F_3$ to be $\{x_2=x_3,
x_0x_1-x_2^2=0\}$.

Let $E_1\to F_1$ be the exceptional divisor in $Y_1$; we also write
$E_1$ for its strict transform in $Y$ or $X$. Let $E_2\to F_2$
be the exceptional divisor in $Y$, or its strict transform in $X$.
Let $E_3\to F_3$ be the exceptional divisor in $X$.
Then $\Pic(X)=\Z\{H,E_1,E_2,E_3\}$ and
$-K_X=4H-E_1-2E_2-E_3$. (To check the formula
for $-K_X$, note that the pullback of $E_1\subset Y_1$ is
$E_1+E_2$ in $X$.)
For $i=1,2,3$, let
$C_i$ be a general fiber of $E_i\to F_i$.
Let $D_1$ be the strict transform of a line in $\P^3$ through $p$
and a point of the conic $F_3$. Let $q$ in $\P^3$ be the intersection
of the line $F_1$ with the plane containing the conic $F_3$.
Let $D_2$ be the strict transform
of a line in $\P^3$ through $q$ that meets the conic $F_3$ in 2 points.
We have the intersection numbers:

\begin{tabular}{c|rrrrr}
& $C_1$ & $C_2$ & $C_3$ & $D_1$ & $D_2$\\
\hline
$H$    & 0  & 0  & 0  & 1 & 1 \\
$E_1$  &$-1$& 1  & 0  & 0 & 1\\
$E_2$  & 0  &$-1$& 0  & 1 & 0\\
$E_3$  & 0  & 0  &$-1$& 1 & 2\\
\end{tabular}

Using Magma, we compute that the dual cone to $\R^{\geq 0}
\{C_1,C_2,C_3,D_1,D_2\}$ is spanned by $H,H-E_2,H-E_1-E_2,
2H-E_3,2H-E_2-E_3$.
These five divisors are basepoint-free, hence nef,
giving contractions of $X$ to $\P^3$, $\P^2$, $\P^1$, the quadric
3-fold $Q$, and $\P^3$.
It follows that $\overline{\Curv(X)}=\R^{\geq 0}\{C_1,C_2,C_3,D_2,D_3\}$,
and the nef cone is spanned by the five divisors mentioned.
More strongly, Magma checks that the nef monoid
is generated by these five divisors.
The line bundle
$-K_X=4H-E_1-2E_2-E_3$
has degree 1 on all five curves
$C_1,C_2,C_3,D_1,D_2$. So every ample line bundle is $-K_X$
plus a nef divisor, hence $-K_X$ plus an $\N$-linear combination
of the five divisors mentioned.

For all five divisors, a general divisor in the linear system
is $\P^2$
blown up at 3 non-collinear points, which is toric.
By Lemma \ref{induction}, this reduces Bott vanishing for (4.4)
to the single line bundle $-K_X$.

Since $X$ is rigid, we know that $H^j(X,\Omega^2_X\otimes K_X^*)
=H^j(X,TX)$ is zero for $j>0$. It remains to show that
$H^j(X,\Omega^1_X\otimes K_X^*)$ is zero for $j>0$.
The curve $F_3$ in $Y$ is a complete intersection
$F_1=S_1\cap S_2$, where $S_1\sim H$ and $S_2\sim 2H$.
(Here, in $\P^3$, $S_1$ is the plane $\{x_2=x_3\}$ and $S_1$
is the quadric cone $\{x_0x_1-x_3^2=0\}$, singular at $p$.
Since the point $p$ is disjoint from the plane $S_1$ in $\P^3$, we could
also describe $F_3$ in $Y$ as a complete intersection of $H$ and $2H-E_1$,
or of $H$ and $2H-2E_2$. But we choose the description mentioned.)

By Lemma \ref{ci}, the desired vanishing would follow
if $-K_Y$, $-K_Y-S_1$ are ample, $S_1$ is a toric variety,
$(-K_Y-S_2)|_{S_1}$ is ample,
and $(-K_Y-S_1-S_2)|_{S_1}$ is nef. Here $-K_Y=4H-E_1-2E_2$,
and the nef cone of $Y$ is spanned by $H,H-E_2,H-E_1-E_2$.
So $-K_Y$ and $-K_Y-S_1$ are ample. The surface $S_1$
is $\P^2$ blown up at one point,
which is toric. Since $E_2$ is disjoint from $S_1$,
$-K_Y-S_2=2H-E_1-2E_2$ restricted to $S_1$ is numerically equivalent
to $(1/2)H+(1/2)(H-E_2)+(H-E_1-E_2)$, which is ample;
and $-K_Y-S_1-S_2=H-E_1-2E_2$ restricted to $S_1$
is numerically equivalent to $H-E_1-E_2$, which is nef.
That completes the proof
of Bott vanishing for (4.4).

Next, we prove Bott vanishing for (4.5). 
We work in characteristic not 2, since $X$ is not rigid
in characteristic 2.
Here $X$ is the blow-up
of $Y:=\P^1\times \P^2$ along two disjoint curves,
$F_1$ of degree $(2,1)$ and $F_2$ of degree $(1,0)$.
Thus $F_1$ is contained in $\P^1\times l$, for a line $l$ in $\P^2$,
and $F_2$ is equal to $\P^1\times p$, for a point $p\not\in l$.
We can take $F_1=\{y_0=0,x_1y_1^2=x_0y_2^2\}$
and $F_2=\{y_1=0,y_2=0\}$ in $\P^1\times \P^2=\{ ([x_0,x_1],
[y_0,y_1,y_2])\}$.
We have $\Pic(X)=\Z\{A,B,E_1,E_2\}$ and $-K_X=2A+3B-E_1-E_2$.
Let $C_i\subset E_i$ be a fiber of $E_i\to F_i$, for $i=1,2$.
Let $D_1$ be the strict transform of the curve $\P^1\times q$ in $Y$,
for a point $q\in l$. Let $(s,t)$ be a point in $F_2$, and let $l_2$
be the line through $p$ and $t$ in $\P^2$. Then let $D_2$
be the strict transform of the curve $s\times l_2\subset \P^1\times \P^2$.
Finally, let $D_3$ be the strict transform of the curve $s\times l$.
We have the intersection numbers:

\begin{tabular}{c|rrrrr}
& $C_1$ & $C_2$ & $D_1$ & $D_2$ & $D_3$\\
\hline
$A$  & 0 & 0 & 1 & 0 & 0\\
$B$  & 0 & 0 & 0 & 1 & 1\\
$E_1$& $-1$& 0 & 1 & 1 & 2\\
$E_2$& 0 & $-1$& 0 & 1 & 0\\
\end{tabular}

We compute that the dual cone to $\R^{\geq 0}\{C_1,C_2,D_1,D_2,D_3\}$
is spanned by the five divisors $A$, $B$, $B-E_2$, $A+2B-E_1$,
and $A+2B-E_1-E_2$. These are all basepoint-free, hence nef,
using the equations for $F_1$ and $F_2$ above. They give
contractions of $X$ to $\P^1$, $\P^2$, $\P^1$,
a nodal 3-fold in $\P^6$, and a nodal quadric 3-fold in $\P^4$.
It follows that $\overline{\Curv(X)}=\R^{\geq 0}\{C_1,C_2,D_1,D_2,D_3\}$,
and the nef cone is spanned by the five divisors mentioned.
Using Magma, we also compute
that the nef monoid is generated by those five divisors.
The line bundle $-K_X=2A+3B-E_1-E_2$ has degree 1 on all five curves
$C_1,C_2,D_1,D_2,D_3$. So every ample line bundle is $-K_X$
plus a nef divisor, hence $-K_X$ plus an $\N$-linear combination
of the five divisors mentioned.

A general divisor in $|A|$ is $\P^2$ blown up at 3 non-collinear
points, which is toric. A general divisor in $|B|$ or in $|B-E_2|$
is $\P^1\times \P^1$
blown up at one point, which is toric. A general divisor
in $|A+2B-E_1-E_2|$ is $\P^2$ blown up at 4 points with no 3 collinear,
hence a quintic del Pezzo surface. That is not toric, but it satisfies
Bott vanishing. A general divisor in $|A+2B-E_1|$ is the previous
surface blown up at one more point; that does not satisfy Bott vanishing.
Instead, we can observe that $A+2B-E_1=(A+2B-E_1-E_2)+E_2$,
where $E_2$ is a $\P^1$-bundle over $\P^1$, which is toric.
By Lemma \ref{induction}, that reduces Bott vanishing for (4.5)
to the single line bundle $-K_X$.

Since $X$ is rigid, we know that $H^j(X,\Omega^2_X\otimes K_X^*)
=H^j(X,TX)$ is zero for $j>0$. It remains to show that
$H^j(X,\Omega^1_X\otimes K_X^*)$ is zero for $j>0$.
Let $Z$ be the blow-up of $Y=\P^1\times \P^2$ along the curve
$F_2$ of degree $(0,1)$; then $Z$ is a toric variety,
with $\Pic(Z)=\Z\{A,B,E_2\}$ and $\Nef(X)=\R^{\geq 0}
\{A,B,B-E_2\}$. The curve $F_1$ is a complete intersection
$S_1\cap S_2$ in $Z$, with $S_1\sim B$ and $S_2\sim A+2B$.
By Lemma \ref{ci}, the desired vanishing holds
if $-K_Z$ and $-K_Z-S_1$ are ample, $S_1$ is a toric variety,
$(-K_Y-S_2)_{S_1}$ is ample, and $(-K_Y-S_1-S_2)_{S_1}$ is nef.
Indeed, $-K_Z=2A+3B-E_2$ and $-K_Z-S_1=2A+2B-E_2$ are ample.
The surface $S_1=\{y_0=0\|$ is isomorphic to $\P^1\times \P^2$,
and the exceptional divisor $E_2$ in $Z$ is disjoint from $S_1$.
So $(-K_Y-S_2)|_{S_1}=A+B$ is ample on $S_1$,
and $(K_Y-S_1-S_2)|_{S_1}=A$ is nef on $S_1$.
That completes
the proof of Bott vanishing for (4.5).

Next, we prove Bott vanishing for (4.6). The proof works
in any characteristic. Here $X$ is the blow-up
of $\P^3$ along three disjoint lines, $F_1,F_2,F_3$. We have
$\Pic(X)=\Z\{H,E_1,E_2,E_3\}$ and $-K_X=4H-E_1-E_2-E_3$.
Let $C_i$ be a fiber of $E_i\to F_i$ for $i=1,2,3$,
and let $D$ be the strict transform of a line in $\P^3$
meeting $F_1$, $F_2$, and $F_3$ (which exists).
Then $H\cdot C_i=0$, $H\cdot D=1$, $E_i\cdot C_j=-\delta_{ij}$,
and $E_i\cdot D=1$, for $1\leq i,j\leq 3$. 
We have the intersection numbers:

\begin{tabular}{c|rrrr}
& $C_1$ & $C_2$ & $C_3$ & $D$ \\
\hline
$H$& 0 & 0 & 0 & 1\\
$E_1$&$-1$& 0 & 0 & 1\\
$E_2$& 0 & $-1$& 0 & 1\\
$E_3$& 0 & 0 & $-1$& 1\\
\end{tabular}

It follows
that the dual basis to $C_1,C_2,C_3,D\in N_1(X)$
is $H-E_1,H-E_2,H-E_3,H$. These four divisors
are basepoint-free, hence nef, giving contractions
of $X$ to $\P^1$, $\P^1$, $\P^1$, and $\P^3$.
Therefore, $\overline{\Curv(X)}=\R^{\geq 0}\{C_1,C_2,C_3,D\}$
and the nef cone is spanned by $H-E_1,H-E_2,H-E_3,H$.
By the intersection numbers, the nef monoid in $\Pic(X)$ is spanned by those
four divisors. Also, the line bundle $-K_X=4H-E_1-E_2-E_3$ has degree
1 on the four curves $C_1,C_2,C_3,D$. It follows that every ample
line bundle on $X$ is $-K_X$ plus a nef divisor, hence $-K_X$
plus an $\N$-linear combination of $H-E_1,H-E_2,H-E_3,H$.

For $1\leq i\leq 3$, a general divisor in $|H-E_i|$ is the blow-up
of $\P^2$ at 2 points, which is toric. A general divisor in $|H|$
is the blow-up of $\P^2$ at 3 non-collinear points, which is also
toric. Therefore, Lemma \ref{induction} reduces Bott vanishing
for $X$ to the single line bundle $-K_X$.
Since $X$ is rigid, we know that $H^j(X,\Omega^2_X\otimes K_X^*)
=H^j(X,TX)$ is zero for $j>0$. It remains to show that
$H^j(X,\Omega^1_X\otimes K_X^*)=0$ for $j>0$.
Let $Y$ be the blow-up of $\P^3$ along the lines $F_1$ and $F_2$;
then $Y$ is a toric variety. We have $\Pic(Y)=\Z\{H,E_1,E_2\}$
and $\Nef(Y)=\R^{\geq 0}\{H-E_1,H-E_2,H\}$. The curve $F_3$
in $Y$ is a complete intersection $S_1\cap S_2$ with
$S_1\sim H$ and $S_2\sim H$. So $-K_Y=4H-E_1-E_2$,
$-K_Y-S_1=3H-E_1-E_2$, and $-K_Y-S_2=3H-E_1-E_2$ are ample,
and $-K_Y-S_1-S_2=2H-E_1-E_2$ is nef. By Lemma \ref{ci},
it follows that $H^j(X,\Omega^1_X\otimes K_X^*)=0$ for $j>0$.
That completes
the proof of Bott vanishing for (4.6).

We now prove Bott vanishing for (4.7).
The proof works in any characteristic.
Here $X$ is the blow-up of the flag
manifold $W\subset \P^2\times \P^2$ along disjoint curves
$F_1$ of degree $(0,1)$ and $F_2$ of degree $(1,0)$. We will instead use
a different description, mentioned by Mori-Mukai \cite[Table 3]{MM}.
Let $S$ be the blow-up of $\P^2$ at a point $p$. Embed $F=\P^1$
into $Y:=\P^1\times S$ by the identity map on $\P^1$ and the inclusion
into $S$ as the strict transform of a line in $\P^2$ not containing $p$.
Then $X$ is the blow-up of $Y$ along $F$. (The advantage of this
description, for hand calculation,
is that it expresses $X$ as the blow-up of the toric
variety $Y$ along a single curve.) We can take $S$ to be $\P^2$
blown up at the point $[0,0,1]$, with $F$ defined by
$y_2=0,x_0y_1=x_1y_0$ in $\P^1\times \P^2=\{[x_0,x_1],
[y_0,y_1,y_2]\}$.

Let $A$ be the pullback to $X$ of $O(1)$ on $\P^1$.
Let $B$ and $H$ be the pullbacks of $O(1)$ by the contractions
of $S$ to $\P^1$ and $\P^2$. Then $\Pic(X)=\Z\{A,B,H,E\}$
and $-K_X=2A+B+2H-E$. Let $C$ be the strict transform in $X$
of a curve $\P^1\times \pt$ that meets the curve $F$ in one point.
Let $D$ be the strict transform of a point in $\P^1$ times the $(-1)$-curve
in $S$. (The curve $D$ is disjoint from $G$.) Let $G$ be the strict
transform of a point in $\P^1$ times the strict transform in $S$
of a line in $\P^2$ through $p$ such that $G$ meets $F$ in one point.
Let $K$ be a fiber of the exceptional divisor $E\to F$. 
We have the intersection numbers:

\begin{tabular}{c|rrrr}
& $C$ & $D$ & $G$ & $K$\\
\hline
$A$& 1 & 0 & 0 & 0\\
$B$& 0 & 1 & 0 & 0 \\
$H$& 0 & 0 & 1 & 0\\
$E$& 1 & 0 & 1 &$-1$\\
\end{tabular}

The dual basis to $C,D,G,K$ is given by
$A,B,H,A+H-E$.
These line bundles are basepoint-free,
hence nef, giving contractions to $\P^1$ (twice)
and $\P^2$ (twice). (Using the equation for $G$ above,
a basis for the sections of $A+H-E$ is given by $x_0y_1-x_1y_0,
x_0y_2,x_1y_2$; these equations define the curve $G$ in $Y$ as a scheme,
which proves the basepoint-freeness of $A+H-E$ on $X$.)

It follows that $\overline{\Curv(X)}
=\R^{\geq 0}\{C,D,G,K\}$, and the nef cone is spanned
by $A,B,H,A+H-E$.
More strongly, the nef monoid in $\Pic(X)$ is generated
by these four divisors. Since $-K_X$ has degree 1
on $C$, $D$, $G$, and $K$, every ample line bundle on $X$
is $-K_X$ plus a nef divisor, hence $-K_X$ plus an
$\N$-linear combination of the $A,B,H,A+H-E$.

A general divisor in $|A|$, $|B|$, $|H|$,
or $|A+H-E|$ is isomorphic to
$\P^2$ blown up at 2 points, which is toric.
By Lemma \ref{induction}, this reduces Bott vanishing on $X$
to the ample line bundle $-K_X$.

For $-K_X$ and $\Omega^2_X$, this is easy: since $X$ is rigid,
we have $H^j(X,\Omega^2_X\otimes K_X^*)=H^j(X,TX)=0$
for $j>0$. It remains
to prove Bott vanishing for $-K_X$ and $\Omega^1_X$,
meaning that $H^j(X,\Omega^1_X\otimes K_X^*)=0$ for $j>0$.
Since $Y=\P^1\times S$,
we have $\Nef(Y)=\R^{\geq 0}\{A,B,H\}$.
We can view the curve $F$ is a complete intersection $S_1\cap S_2$
with $S_1\sim H$ and $S_2\sim A+B$. By Lemma \ref{ci},
the desired vanishing holds is $-K_Y$, $-K_Y-S_1$,
and $-K_Y-S_2$ are ample and $-K_Y-S_1-S_2$ are nef.
Indeed, $-K_Y=2A+B+2H$ is ample,
$-K_Y-S_1=2A+B+H$ is ample, $-K_Y-S_2=A+B+H$ is ample,
and $-K_Y-S_1-S_2=A+B$ is nef.
That completes the proof of Bott vanishing for (4.7).

We now prove Bott vanishing for the Fano 3-fold (4.8).
The proof works in any characteristic.
Here $X$ is the blow-up of $(\P^1)^3$ along a curve $F$
of degree $(0,1,1)$. 
We can take
$F=\{x_1=0,y_0z_1=y_1z_0\}$
in $(\P^1)^3=\{ ([x_0,x_1],
[y_0,y_1],[z_0,z_1])\}$.
We have $\Pic(X)=\Z\{A,B,C,E\}$ and $-K_X=2A+2B+2C-E$.
Let $D_4$ be a fiber of $E\to F$. Let $D_1,D_2,D_3$ be strict
transforms of curves $\P^1\times\pt\times\pt$,
$\pt\times\P^1\times\pt$, $\pt\times\pt\times \P^1$
that meet the curve $F$ in one point each.
We have the intersection numbers:

\begin{tabular}{c|rrrr}
& $D_1$ & $D_2$ & $D_3$ & $D_4$ \\
\hline
$A$  & 1 & 0 & 0 & 0 \\
$B$  & 0 & 1 & 0 & 0\\
$C$  & 0 & 0 & 1 & 0\\
$E$  & 1 & 1 & 1 & $-1$\\
\end{tabular}

We compute that the dual basis to $D_1,D_2,D_3,D_4$
is $A,B,C,A+B+C-E$. 
These three divisors are basepoint-free, hence nef,
using the equation for $F$ above. They give
contractions of $X$ to $\P^1$, $\P^1$, $\P^1$, and a nodal
quadric 3-fold.
It follows that $\overline{\Curv(X)}=\R^{\geq 0}\{D_1,D_2,D_3,D_4\}$,
and the nef cone is spanned by the four divisors mentioned.
More strongly,
the nef monoid in $\Pic(X)$ is generated by those four divisors.
The line bundle $-K_X=2A+2B+2C-E$ has degree 1 on all four curves
$D_1,D_2,D_3,D_4$. So every ample line bundle is $-K_X$
plus a nef divisor, hence $-K_X$ plus an $\N$-linear combination
of the four divisors mentioned.

A general divisor in $|A|$ is isomorphic
to $\P^1\times\P^1$, which is toric. A general divisor
in $|B|$ or $|C|$ is $\P^1\times\P^1$
blown up at one point, which is toric.
A general divisor
in $|A+B+C-E|$ is $\P^2$ blown up at 3 non-collinear points,
which is toric.
By Lemma \ref{induction}, this reduces Bott vanishing for (4.8)
to the single line bundle $-K_X$.

Since $X$ is rigid, we know that $H^j(X,\Omega^2_X\otimes K_X^*)
=H^j(X,TX)$ is zero for $j>0$. It remains to show that
$H^j(X,\Omega^1_X\otimes K_X^*)$ is zero for $j>0$. Let $Y=(\P^1)^3$;
then $-K_Y=2A+2B+2C$ and
$\Nef(Y)=\R^{\geq 0}\{A,B,C\}$. The curve $F$ is a complete
intersection $S_1\cap S_2$ in $Y$ with $S_1\sim A$ and $S_2\sim B+C$.
By Lemma \ref{ci},
the desired vanishing holds if $-K_Y$, $-K_Y-S_1$, and $-K_Y-S_2$
are ample, and $-K_Y-S_1-S_2$ is nef. In this case, all four line bundles
are ample. That completes
the proof of Bott vanishing for (4.8).

\section{The Fano 3-fold (5.1)}
\label{iterated}

The Fano 3-fold (5.1) (with Picard number 5)
turns out to be the hardest, for proving
Bott vanishing.
Our technique of reducing to general properties
of toric varieties seems not to be strong enough
in this case. Still, we give a meaningful proof
using Hodge cohomology.
We will prove Bott vanishing in characteristic not 2, since $X$ is not rigid
in characteristic 2.

One construction of (5.1) is similar to (4.4).
Let $J$ be a conic in $Q$. Then $X$
is the blow-up of $\Bl_{J} Q$
along three fibers of the exceptional divisor.
As suggested by Coates-Corti-Galkin-Kasprzyk, we instead
view $X$ as a hypersurface in a smooth toric 4-fold $G$
\cite[section 98]{CCGK}. Namely, $G$ is obtained
from $\P^4$ by blowing up along a plane $\Pi$
and then along the fibers over
three non-collinear points $p_2,p_3,p_4$ in $\Pi$. Let $H$ be the pullback
of $O(1)$ on $\P^4$, $E_1$ the (irreducible) exceptional divisor
over $\Pi$, and $E_2,E_3,E_4$ the exceptional divisors over the three points
in $\Pi$. Then $\Pic(G)=\Z\{H,E_1,E_2,E_3,E_4\}$, and the nef cone of $G$
is spanned by $H$, $H-E_2$, $H-E_3$, $H-E_4$, $H-E_1-E_2-E_3-E_4$,
and $2H-E_2-E_3-E_4$, by \cite{CCGK}. (In their notation,
$A=H-E_1-E_2-E_3-E_4$, $B=E_1$, $C=E_2$, $D=E_3$, and $E=E_4$.)
These divisors are basepoint-free, giving contractions of $G$
to $\P^4$, $\P^3$ (three times), $\P^1$, and another toric 4-fold.

For completeness, let us list the intersection numbers
between divisors and some curves on $G$ (which span the cone
of curves). This could be used
to compute the nef cone of $G$, if we did not already know it.
Namely, let $C_1,C_2,C_3,C_4$ be general fibers of the exceptional divisors
$E_1,E_2,E_3,E_4$ on $G$. Also, for $2\leq i\leq 4$, $D_i$
will be a curve in $X$ mapping to the line through $p_j$ and $p_k$,
where $\{i,j,k\}=\{2,3,4\}$; more precisely, let $D_i$ be the section
of $E_1\to \Pi$ associated to a general plane in $\P^2$ containing
that line. Then we have the intersection numbers:

\begin{tabular}{c|rrrrrrrr}
& $C_1$ & $C_2$ & $C_3$ & $C_4$ & $D_2$ & $D_3$ & $D_4$\\
\hline
$H$    & 0 &  0 & 0  & 0 & 1 & 1   & 1 \\
$E_1$  &$-1$& 1 & 1 & 1 &$-1$&$-1$&$-1$\\
$E_2$  & 0 &$-1$ & 0 & 0 & 0 & 1 & 1\\
$E_3$  & 0 & 0 &$-1$& 0 & 1 & 0 & 1\\
$E_4$  & 0 & 0 & 0 &$-1$& 1 & 1 & 0
\end{tabular}

The Fano 3-fold $X$ is a general divisor in the linear system
$|2H-E_2-E_3-E_4|$; that is, $X$ is the strict transform in $G$
of a quadric 3-fold $Q$ containing $p_2,p_3,p_4$. So $X$ is obtained
from $Q$ by blowing up
along a conic $F_1$ containing those 3 points, and then along
the fibers over those 3 points.
Here $\Pic(X)=\Z\{H,E_1,E_2,E_3,E_4\}$,
but (unfortunately)
the nef cone of $X$ turns out to be bigger than $\Nef(G)$.
To list some curves on $X$: we can view $C_1,C_2,C_3,C_4$ above
as curves on $X$, namely fibers in the four exceptional divisors.
For $i=2,3,4$, let $K_i$ be the strict transform in $X$ of a line
in $Q$ through $p_i$. Finally, let $V$ be the section of $E_1\to F_1$
associated to a hyperplane section of $Q$ containing the conic $F_1$.
Then we have the intersection numbers:

\begin{tabular}{c|rrrrrrrr}
& $C_1$ & $C_2$ & $C_3$ & $C_4$ & $K_2$ & $K_3$ & $K_4$ & $V$\\
\hline
$H$    & 0 &  0 & 0  & 0 & 1 & 1   & 1 & 2\\
$E_1$  &$-1$& 1 & 1 & 1 & 0 & 0 & 0 & $-1$\\
$E_2$  & 0 &$-1$ & 0 & 0 & 1 & 0 & 0 & 1\\
$E_3$  & 0 & 0 &$-1$& 0 & 0 & 1 & 0 & 1\\
$E_4$  & 0 & 0 & 0 &$-1$& 0 & 0 & 1 & 1
\end{tabular}

Using Magma, we compute that the dual cone to the cone spanned
by these eight curves is spanned by $H$, $H-E_2$, $H-E_3$,
$H-E_4$, $H-E_2-E_3$, $H-E_2-E_4$, $H-E_3-E_4$,
and $H-E_1-E_2-E_3-E_4$.
These eight divisors are basepoint-free, hence nef,
giving contractions of $X$ to the quadric 3-fold $Q$,
$\P^3$ (three times), $\P^2$ (three times), and $\P^1$.
It follows that the cone of curves of $X$ is spanned by the eight curves
above,
and the nef cone is spanned by these eight divisors.
More strongly, Magma checks that the nef monoid in $\Pic(X)$
is generated by these eight divisors.
The line bundle $-K_X=3H-E_1-2E_2-2E_3-2E_4$
has degree 1 on all eight curves.
So every ample line bundle is $-K_X$
plus a nef divisor, hence $-K_X$ plus an $\N$-linear combination
of the eight divisors mentioned.

For each of those divisors except $H-E_1-E_2-E_3-E_4$,
a general divisor in the linear system
is isomorphic to $\P^2$
blown up at three non-collinear points, which is toric.
A general divisor in $|H-E_1-E_2-E_3-E_4|$ is $\P^2$
blown up at four points with no three collinear; this is the quintic
del Pezzo surface, which is not toric but satisfies Bott vanishing.
By Lemma \ref{induction}, this reduces Bott vanishing for (5.1)
to the single line bundle $-K_X$.

Since $X$ is rigid in characteristic not 2,
we know that $H^j(X,\Omega^2_X(-K_X))
=H^j(X,TX)$ is zero for $j>0$.
It remains to show that $H^j(X,\Omega^1_X(-K_X))=0$ for $j>0$,
which we will prove in any characteristic.
This was shown by Belmans-Fatighenti-Tanturri, in terms
of the isomorphic vector bundle $\Lambda^2TX$, when the base field
has characteristic zero \cite[Appendix A]{BFT}.

To do this, recall that $X$ is a hypersurface in the smooth toric
4-fold $G$.
We have $-K_G=2(2H-E_2-E_3-E_4)+(H-E_1-E_2-E_3-E_4)
=5H-E_1-3E_2-3E_3-3E_4$,
which is nef and big but not ample. So $-K_X$ is the restriction
of $-K_G-X=3H-E_1-2E_2-2E_3-2E_4$, which is also nef and big but not ample.
Since $G$ is a toric variety, it follows that $-K_G-X$ is basepoint-free
\cite[p.~68]{Fulton}.
Consider the contraction $\pi\colon G\to F$ associated
to the line bundle $-K_G-X$; this is a small contraction, and all
contracted curves are disjoint from $X$.
The singular set $S$ of $F$
consists of three disjoint $\P^1$'s. (The curves $D_2,D_3,D_4$
in $G$ are contracted by $\pi$ to points in these three components of $S$.)
Here $X$ is still
a smooth hypersurface in $F$, but now $-K_F$ and $X=2H-E_2-E_3-E_4$
(which pull back to $-K_G$ and $X$ on $G$)
are ample line bundles on $F$. The description
of the toric Fano 4-fold $F$ by a fan
is given in Belmans-Fatighenti-Tanturri's
file about the Fano 3-fold (5.1) \cite{BFT}.
The singularities of $F$
are locally isomorphic to a smooth curve times the 3-fold node.
We will prove the desired cohomology
vanishing by relating $X$ to the singular toric Fano 4-fold $F$,
although the smooth toric 4-fold $G$ also comes up
in the argument.

We have the exact sequences of coherent sheaves
$0\to O_X(-X)\to \ohat|_X \to \Omega^1_X\to 0$
on $X$ and $0\to O(-X)\to O_F \to O_X\to 0$ on $F$.
Tensoring the first sequence with $-K_F-X$, we have
$0\to O_X(-K_F-2X)\to \ohat(-K_F-X)|_X\to \Omega^1_X(-K_X)\to 0$.
So the desired vanishing would follow
if $\ohat(-K_F-X)|_X$ has zero cohomology in positive degrees
and $O_X(-K_F-2X)$ has zero cohomology in degrees $>1$.
From the second sequence above, it suffices to show that
(1) $\ohat(-K_F-X)$ has zero cohomology (on $F$) in positive degrees;
(2) $\ohat(-K_F-2X)$ has zero cohomology in degrees $>1$;
(3) $-K_F-2X$ has zero cohomology (on $F$) in degrees $>1$; and
(4) $-K_F-3X$ has zero cohomology in degrees $>2$. Since $-2K_F-2X$
and $-2K_F-3X$
is ample, (3) and (4) are immediate from Kodaira vanishing
on the toric variety $F$ (part of Theorem \ref{log}). Also, since
$-K_F-X$ is ample, (1) follows from Bott vanishing on $F$
(Theorem \ref{log}). (This is the advantage of working with $F$
rather than $G$.)

Here $A:=-K_F-2X=H-E_1-E_2-E_3-E_4$
is nef, but $F$ is singular, and so (2) is not immediate
from Proposition \ref{nef}. On the other hand, we know that
$H^j(G,\Omega^1_G(A))=0$ for $j>0$ by Proposition \ref{nef},
and so it seems natural to compare the singular variety $F$ with
its resolution $G$. Let $S$ be the singular locus of $F$,
which is the disjoint union of three $\P^1$'s.
Near $S$, the morphism $\pi\colon G\to F$ is locally
a smooth curve times
one of the two small resolutions
of the 3-fold node $xy=zw$ in $A^4$. It follows, for example
using the theorem on formal functions \cite[Theorem III.11.1]{Hartshorne},
that the sheaf
$R^j\pi_*\Omega^1_G$ is isomorphic to $\ohat$ for $j=0$, $O_S$ for $j=1$,
and zero otherwise. Equivalently, we have an exact triangle
$\ohat \to R\pi_*\Omega^1_G\to O_S[-1]$ in the derived
category of $F$. So we have a long exact sequence
$$H^1(F,\ohat)\to H^1(G,\Omega^1_G)\to H^0(S,O_S)\to H^2(F,\ohat)\to\cdots .$$

Here $H^0(S,O_S)\cong k^3$. I claim that the map from $H^1(G,\Omega^1_G)$
to $H^0(S,O_S)$ is surjective. It is equivalent to show that the image
of $H^1(F,\ohat)\to H^1(G,\Omega^1_G)$ has codimension at least 3.
So it suffices to find a surjection
$H^1(G,\Omega^1_G)\to k^3$ that is zero on the image of $H^1(F,\ohat)$.
In the notation above, the curves $D_2,D_3,D_4$ in $G$
map to $k$-points in the three
components of $S$. Then the restriction map from $H^1(G,\Omega^1_G)$
to $\oplus_{i=2}^4H^1(D_i,\Omega^1_{D_i})=k^3$ vanishes on $H^1(F,\ohat)$
(because the curves $D_i$ map to points in $F$). So it suffices to show
that the composition $\Pic(G)\to H^1(G,\Omega^1_G)\to k^3$
is surjective. This map gives the degrees of line bundles on $G$
on the three curves $D_i$. The intersection numbers
of $H,E_3,E_4$ with these three curves are $\begin{pmatrix}
1 & 1 & 1\\1 & 0 & 1\\1 & 1 & 0\end{pmatrix}$,
which has determinant 1. So, for $k$ of any characteristic,
we have shown that $H^1(G,\Omega^1_G)\to H^0(S,O_S)$ is surjective.
(By the exact sequence above, it follows that $H^2(F,\ohat)=0$.)

We want to compute the related groups $H^j(F,\ohat(A))$.
By the exact triangle 
$\ohat(A) \to R\pi_*\Omega^1_G(A)\to A|_S[-1]$ in the derived
category of $F$, we have a long exact sequence
$$H^1(F,\ohat(A))\to H^1(G,\Omega^1_G(A))\to H^0(S,A)
\to H^2(F,\ohat(A))\to \cdots .$$
The line bundle $A$ has degree 1 on each $\P^1$ component
of $S=S_2\coprod S_3\coprod S_4$, using that each of these curves on $F$
is the image of a curve numerically equivalent to $C_1$ on $G$.
Since $A$ is nef on the toric
variety $G$, it is basepoint-free. It follows that
the restriction $H^0(G,A)\to H^0(S_i,A)\cong k^2$ is surjective
for $i=2,3,4$. Combining this with the previous paragraph,
the composition
$H^1(G,\Omega^1_G)\otimes_k H^0(G,A)\to H^1(G,\Omega^1_G(A))
\to H^0(S,A)$ is surjective. Also, $H^j(G,A)=0$
for $j>0$ and (by Proposition \ref{nef})
$H^j(G,\Omega^1_G(A))=0$ for $j>1$.
Therefore, the exact sequence above shows that
$H^j(F,\ohat(A))=0$ for all $j>1$. This is statement (2), above.
That completes the proof that $H^j(X,\Omega^1_X(-K_X))=0$
for $j>0$.
Thus Bott vanishing holds for the Fano 3-fold (5.1).

% Omit these bibliography lines if there's no bibliography.

\small \sc UCLA Mathematics Department, Box 951555,
Los Angeles, CA 90095-1555

totaro@math.ucla.edu
\end{document}